\newtheorem{theorem}{Theorem}[section]
\newtheorem{mthm}{Main Theorem}
\newtheorem{prop}[theorem]{Proposition}
\newtheorem{lemma}[theorem]{Lemma}
\theoremstyle{definition}
\newtheorem{rmk}[theorem]{Remark}
\newtheorem{example}[theorem]{Example}
\newtheorem{definition}[theorem]{{\bf Definition}}
\newtheorem{mthmX}{Main Theorem}
\newtheorem*{definition*}{{\bf Definition}}
\newcommand{\opn}{{\mathcal{O}_{\mathbb{P}^3}}}
\newcommand{\pn}{{\mathbb{P}^3}}
\newcommand{\HH}{{\mathcal H}}
\newcommand{\FF}{\mathcal{F}}
\newcommand{\TT}{\mathcal{T}}
\newcommand{\Ag}{\mathcal{A}^{\gamma(t)}}
\newcommand{\BB}{\mathcal{B}^{\beta(t)}}
\newcommand{\im}{\operatorname{Im}}
\newcommand{\codim}{\operatorname{codim}}
\newcommand{\coker}{\operatorname{coker}}
\newcommand{\ch}{\operatorname{ch}}
\newcommand{\Coh}{\operatorname*{Coh}(\mathbb{P}^3)}
\newcommand{\Db}{\operatorname*{D}^b(\mathbb{P}^3)}
\newcommand{\gsk}{\operatorname{GS}_k}
\newcommand{\gs}{\operatorname{GS}}
\newcommand{\reg}{\operatorname{reg}}
\newcommand{\Supp}{\operatorname{Supp}}
\title{Zero Rank Asymptotic Bridgeland Stability}
\author{Victor do Valle Pretti}
\affil{IMECC - UNICAMP \\
Departamento de Matem\'atica \\ Rua S\'ergio Buarque de Holanda, 651 \\
13083-970 Campinas-SP, Brazil\\
v120238@dac.unicamp.br}
\date{}
\begin{document}
\maketitle

\begin{abstract}
In this paper we examine the conditions that an object $E$ with $\ch_0(E)=0$ has to satisfy in order for it to be asymptotically (semi)stable with regard to Weak or Bridgeland stability conditions. This notion turned out to be equivalent to sheaf Gieseker-Simpson (semi)stability or a dual of it, depending on the curve considered.
\end{abstract}

\section{Introduction}

 The concept of Bridgeland stability condition was introduced by Bridgeland in \cite{Bri1} as a way to define an invariant for triangulated categories through a mathematician's point of view of Douglas's work on $\Pi$-stability. In a later paper \cite{Bri2}, Bridgeland uses torsion pairs to parametrize a half-plane of stability conditions over a smooth K$3$ surface and introduces the study of the large volume limit to these stability conditions. These techniques are generalized in \cite{BMT} by defining the notion of tilt stability and also conjecturing the existence of Bridgeland stability conditions over smooth projective threefolds.
 
 The existence of Bridgeland stability conditions in threefolds is conditioned to the existence of a generalized Bogomolov-type inequality for tilt stability as proved in \cite{BMT}. This inequality is proved in several cases: for $\mathbb{P}^3$ by Macr\`{\i} \cite{Mac3}, for quadric threefold by Schimdt \cite{Sch1}, for Fano threefolds with Picard rank $1$ and the quintic threefold by Li \cite{Li1,Li2}, for all abelian threefolds by Bayer--Macr\`{\i}--Stellari \cite{BMS} and principally polarized abelian threefolds with Picard rank $1$ by Maciocia--Piyaratne \cite{AP,AP2}. There is no known proof, to this point, of a general existence theorem for the Bridgeland stability conditions over smooth projective threefolds.
 
In fact, it has been proved that there are varieties for which a generalized Bogomolov inequality does not hold. Moreover, it is known that the generalized Bogomolov inequality, as proposed in \cite{BMT}, is a sufficient condition to prove the existence of Bridgeland stability conditions but it is not a necessary condition, see \cite{MS}.
 
 In \cite{AMac}, Maciocia defines the notion of numerical and actual walls for Bridgeland stability conditions and proves that, over surfaces, they behave like semi-circles in the half-plane. This is then generalized by Schmidt to tilt stability conditions over threefolds in \cite{Sch1}. In the threefold case it is more difficult to describe the walls for Bridgeland stability conditions as they are quartic curves in the half-plane and may be unbounded.
 
 To study the structure of these walls, Jardim and Maciocia define both Bridgeland and tilt asymptotic stability with respect to an unbounded curve $\gamma$. They were able to prove that over a special class of curves the notion of asymptotic tilt (semi)stability is related to $\gs_{2}$-(semi)stability and that asymptotic Bridgeland (semi)stability is related to Gieseker-(semi)stability. We will use the following notion of asymptotic stability throughout the paper:
 
 \begin{definition*}
Let $\sigma_{\beta,\alpha}=(Z_{\beta,\alpha},C^{\beta,\alpha})$ be a family of weak stability condition parametrized by $(\beta,\alpha) \in \mathbb{H}$, $\phi_{\beta,\alpha}$ its slope and $\gamma$ an unbounded curve in $\mathbb{H}$.  For an object $A \in \Db$ to be asymptotic $\phi_\gamma$-(semi)stable it has to satisfy the following conditions:
\begin{enumerate}[label=(\alph*)]
    \item There exists $t_0 \in \mathbb{R}$ such that $A \in C^{\gamma(t)}$ for $t>t_0$,
    
    \item Suppose that there exists $t_1\geq t_0 \in \mathbb{R}$ such that $F \stackrel{f}{\hookrightarrow} A \in C^{\gamma(t)}$ for some $F \in \Db$ and every $t>t_1$, then exists $t_2 \in \mathbb{R}$ with $\phi_{\gamma(t)}(F) <(\leq) \phi_{\gamma(t)}(A)$ whenever $t>t_2$.
\end{enumerate}
\end{definition*}

 It is important to note that in \cite{JM} there is a different version of asymptotic stability, a stronger variation. This stronger version is equivalent to saying that whenever we choose an unbounded curve $\gamma$, an object $A$ is strongly asymptotic $\lambda_\gamma$-(semi)stable if exists a last actual wall intersecting $\gamma$ and $A$ is asymptotic $\lambda_\gamma$-(semi)stable, that is, if there exists $t_0$ such that $A$ is $\lambda_{\gamma(t),s}$-(semi)stable for all $t>t_0$. The paper proves the existence of a last actual wall for all sheaves in an specific curve and what are the conditions sufficient and necessaries for a sheaf $E$ with Chern character $\ch_0(E)\neq 0$ to be asymptotic $\phi_\gamma$-(semi)stable.

 In this paper we try to fill the gap left in the paper, proving the sufficient and necessary condition for a sheaf $E$ with Chern character $\ch_0(E)=0$ to be $\phi_\gamma$-(semi)stable, for an unbounded curve $\gamma(t)=(\beta(t),\alpha(t))$ in $\mathbb{H}$. We have the following results:
 
 \begin{mthmX}
Let $\underset{t \rightarrow +\infty}{\lim}\beta(t)=-\infty$ and $c_\gamma<1$. Suppose that $E \in \Db$ with $\ch_0(E)=0$, for $E$ to be asymptotic $\lambda_{\gamma,s}$-(semi)stable it is necessary and sufficient that $E \in \Coh$ is a Gieseker-(semi)stable sheaf.
\end{mthmX}

 \begin{mthmX}
Let $\underset{t \rightarrow +\infty}{\lim}\beta(t)=+\infty$ and $c_\gamma<1$. An object $E \in \Db$ with $\ch_0(E)=0$ is asymptotic $\lambda_{\gamma}$-(semi)stable if and only if it is the dual of a Gieseker-(semi)stable sheaf. 
\end{mthmX}

 \textbf{Acknowledgments:} I'd like to thank both my advisors Prof. Marcos Jardim and Prof. Antony Maciocia for the opportunity to work on this topic and the incredible support in the development of this research. This work is supported by FAPESP research grant 2016/25249-0 and 2019/05207-9. Also, I'd like to thank FAPESP for funding a number of grants throughout my education and making this research possible. This project was partially developed at the University of Edinburgh to which I am grateful for the hospitality and support.

\section{Background}

This section is where we define most of the notations and concepts we are going to study throughout the paper. It starts with a broad definition of weak and Bridgeland stability conditions, with examples constructed using tilts of hearts of bounded $t$-structures.

Next we begin to define some curves called \emph{distinguished curves} which will be important when trying to understand whether objects lie in $\mathcal{A}^{\beta,\alpha}$ or $\mathcal{B}^{\beta}$, later this will be used to describe what are the objects at infinity. This subsection finishes with an important definition of unbounded curves we will be interested in.

The final section is dedicated to defining our main concept, namely \emph{asymptotic stability}, and its sheaf counter-part $\gsk$-stability. We also prove here the Gieseker-stability of the structure sheaf of smooth subvarieties and a restatement in our notation of a classic Grothendieck Theorem about boundedness of families of sheaves.

\subsection{Stability conditions}

 Let $X$ be a smooth projective irreducible variety of dimension $3$ with Picard rank $1$ over $\mathbb{C}$, $K(X)$ its Grothendieck group, $\Gamma$ a finite rank lattice and $v : K(X) \twoheadrightarrow \Gamma$ a surjective group homomorphism. We will start by defining the stability conditions we will work with. These are given by a heart of a bounded t-strucure in $D^b(X)$ and a homomorphism from the Grothendieck group $K(X)$ to the complex numbers, it will be a generalization of the $\mu$-slope stability for sheaves. 

\begin{definition}
A weak stability condition is a pair $\sigma=(Z,\mathcal{A})$, where $Z: K(X) \rightarrow \mathbb{C}$ is a group homomorphism and $\mathcal{A}$ is a heart of a bounded $t$-structure satisfying:

\begin{itemize}
    \item(Positivity) For every $E \in \mathcal{A}$ we have $ Z(E)\in \mathbb{R}_{\geq 0} \cdot e^{i\pi \phi}$ for some $\phi \in [0,1)$. The phase of $E$ is defined as $\phi_\sigma(E):=-\Re(Z(E))/\Im(Z(E))$ and $\phi_\sigma(E)=+\infty$ if $Z(E)=0$.
    
A object $A \in \mathcal{A}$ is called $\phi_\sigma$-(semi)stable if $\phi(A)>(\geq)\phi(F)$ for every non-zero subobject $F$ of $A$ in $\mathcal{A}$.
    
    \item(Harder--Narasimhan filtration) Let $E \in \mathcal{A}$, then there exists $n \in \mathbb{Z}_{>0}$ and $E_0,...,E_n \in \mathcal{A}$ such that
    
    \begin{center}
  $E_0=0 \subset E_1 \subset E_2 \subset ... \subset E_{n-1} \subset E=E_n$
  \end{center}
  
  with $F_i= E_i/E_{i-1}$ $\phi_\sigma$-semistable objects such that $\phi_\sigma(F_i)>\phi_\sigma(F_{i-1})$ for all $1\leq i \leq n$.
  
  \item(Numerical condition) There is a factorization for the homomorphism $Z$ such as \\
 \centerline{
\xymatrix{
K(X) \ar[rr]^{Z} \ar[rd]^{v} &&\mathbb{C}\\
&\Gamma_\mathbb{Q} \ar[ru]^{\tilde{Z}}
}}

\end{itemize}
\end{definition}

To define a Bridgeland stability we have to impose a few extra conditions on \linebreak $\sigma=(Z,\mathcal{A})$, namely the non-existence of objects in the kernel of the stability function $Z$ and the support property. The support property is usually the most difficult part to be proven when trying to construct a new Bridgeland stability condition, as it is related to the existence of a generalized Bogomolov-type inequality.

\begin{definition}\label{def-Bri}
A weak stability condition $\sigma=(Z,\mathcal{A})$ is a Bridgeland stability condition if satisfies:
\begin{enumerate}[label=(\alph*)]
    \item There is no object $E$ in $\mathcal{A}$ such that $Z(E)=0$. 
    
    \item(Support property) Let $|| \cdot ||_{\mathbb{R}}$ be a norm in $\Gamma_\mathbb{R}$. Then
 
 \begin{center}
 $C_\sigma:= \inf \Big\{ \frac{|Z(E)|}{||v(E)||} : \text{$E \neq 0$ and $\phi$-semistable}\Big\} >0$. 
 \end{center}
    
\end{enumerate}
\end{definition}

Every weak stability condition $\sigma=(\mathcal{A},Z)$ satisfy a weak seesaw property, that is, for every exact sequence in $\mathcal{A}$
\[ 0 \rightarrow E \rightarrow F \rightarrow G \rightarrow 0,\]
either all $\phi_\sigma$-slopes satisfy $\phi_{\sigma}(E)\leq \phi_{\sigma}(F) \leq \phi_{\sigma}(G)$ or $\phi_{\sigma}(E) \geq \phi_{\sigma}(F) \geq \phi_{\sigma}(G)$. In a Bridgeland stability condition we have a more refined seesaw property saying that the phase of objects in an exact sequence either all strictly increase or all strictly decrease or are all equal.

We will be working with $X=\pn$ as it is one notable example where Bridgeland Stability Conditions are known to exist, see \cite{Mac3}. Fix $\Gamma= \mathbb{Z} \oplus \mathbb{Z} \oplus \frac{1}{2} \mathbb{Z} \oplus \frac{1}{6} \mathbb{Z}$ as the finite rank lattice, H as the hyperplane cycle in $K_{num}(X)=\Gamma$ and $v: K(X) \twoheadrightarrow \Gamma$ as
\[ v(E) = (\ch_0(E)\cdot H^3 , \ch_1(E)\cdot H^2 , \ch_2(E)\cdot H , \ch_3(E)), \]
where $\ch_i(E) \in A^i(X)\otimes \mathbb{Q}$ and $\cdot$ represents the intersection of these classes in the rational Chow ring $A(X)\otimes\mathbb{Q}$. From now on set the notation as $\ch_i(E)=v_i(E)$. Let $\beta$ be a real number and define 
\[ \ch_0^\beta(E)=\ch_0(E)\]
\[\ch_1^\beta(E) = \ch_1(E) - \beta \ch_0(E)\]
\[\ch_2^\beta(E) = \ch_2(E) - \beta \ch_1(E) + \frac{1}{2} \beta^2 \ch_0(E)\]
\[\ch_3^\beta(E) = \ch_3(E) - \beta \ch_2(E) + \frac{1}{2} \beta^2 \ch_1(E) - \frac{1}{6} \beta^3 \ch_0(E).\]
\begin{rmk}
Throughout the paper we will define functions using either $E\in \Db$ or $v \in \Gamma_\mathbb{Q}=\Gamma\otimes\mathbb{Q}$, this definitions will be interchangeable by making $v=\ch(E)$.
\end{rmk}

In \cite{BMT} Bayer--Macr\`i--Toda conjectured a way to generate Bridgeland stability conditions for threefolds, which later was proven to hold for a few cases of threefolds, see \cite{BMS,BMSZ,Li1,Li2}. The idea was a generalization to the threefold case of the approach given in \cite{Bri1} by using the stability notion for surfaces as a weaker notion of stability for threefolds and apply a new tilting to the already tilted category. 

Start by considering
\[\mathcal{T}_\beta :=\{ E \in Coh(X) |\text{ for every $E \twoheadrightarrow Q$, $\mu(Q) > \beta$}\},\]
\[\mathcal{F}_\beta :=\{ E \in Coh(X) | \text{ for every $F \hookrightarrow E$, $\mu(F)\leq \beta$}\} \] 
subsets of $\Coh$ such that $(\mathcal{T}_\beta,\mathcal{F}_\beta)$ form a torsion pair in $\Coh$, as in \linebreak \cite[Definition 3.2]{Bri2}, for every $\beta \in \mathbb{R}$. Note that all sheaves in $\mathcal{F}_\beta$ are torsion-free. Applying a tilt in $\Coh$ we obtain a new heart of bounded $t$-structure and denote it by $\mathcal{B}^\beta$. This family of categories can be used to create a family of weak stability conditions known as \emph{tilt stability conditions}. Let $\alpha \in \mathbb{R}_{>0}$, $E \in \Db$ and
\begin{equation}\label{tilt}
    Z_{\beta,\alpha}^{t}(E) = \left(- \ch_2^\beta(E) + \frac{1}{2}\alpha^2 \ch_0(E)\right) + i \ch_1^\beta(E),
\end{equation}
such that $\sigma^t_{\beta,\alpha}=(Z_{\beta,\alpha}^t,\mathcal{B}^\beta)$ is the desired weak stability condition, as proved in \cite{Sch1}. Denote by $\nu_{\beta,\alpha}$ the phase of $\sigma^t_{\beta,\alpha}$.

This stability condition was extensively studied in the surface case, see \cite{AMac,BD}, and this knowledge was later applied to the threefold case in \cite{Sch1}. To use the construction done in \cite{BMT}, define a new torsion pair, now in $\mathcal{B}^\beta$, as
\[\mathcal{T}_{\beta,\alpha} := \{E \in \mathcal{B}^\beta | \text{ for every $E \twoheadrightarrow Q$, $\nu_{\beta,\alpha}(Q)>0$}\},\]
\[\mathcal{F}_{\beta,\alpha} := \{E \in \mathcal{B}^\beta | \text{ for every $F \hookrightarrow E$, $\nu_{\beta,\alpha}(F)\leq 0$}\}\] 
and applying a tilt to $\mathcal{B}^\beta$ in order to obtain the new family of hearts of bounded $t$-structures denoted by $\mathcal{A}^{\beta,\alpha}$. The group homomorphism we will be using as a stability function is defined as follows
\[Z_{\beta,\alpha,s}(E) = \left(-\ch_3^\beta(E) + \left(s+\frac{1}{6}\right)\ch_1^\beta(E)\right)+ i \left(\ch_2^\beta(E)-\frac{1}{2}\alpha^2\ch_0(E)\right),\]
for $E \in \Db$, so that $\sigma_{\beta,\alpha,s}=(Z_{\beta,\alpha,s},\mathcal{A}^{\beta,\alpha})$ is a Bridgeland stability condition, for every $s \in \mathbb{R}_{>0}$. In \cite{BMT}, they proved that $\sigma_{\beta,\alpha,s}$ is a weak stability condition satisfying condition $(a)$ in Definition \ref{def-Bri}, the support property was conjectured to be true and associated to the existence of a generalized Bogomolov-type inequality. 

\begin{definition}
Let $E$ be $\nu_{\beta,\alpha}$-semistable object in $\mathcal{B}^{\beta}$. Then $\ch(E)$ satisfy the Generalized Bogomolov inequality:
\[ Q_{\beta,\alpha}(E):=(\ch_1(E)^2-2\cdot \ch_2(E)\ch_0(E))\alpha^2+4(\ch_2^\beta(E))^2-6\ch_1^\beta(E)\ch_3^\beta(E)\geq 0.\]
\end{definition}

Where \[Q^{\text{tilt}}(E)=\ch_1^2(E)-2\cdot \ch_2(E)\ch_1(E)\] is a quadratic form such that $Q^{\text{tilt}}(E)\geq 0$ for any $\mu$-semistable sheaf E, a classical result known as the Bogomolov inequality. 

\begin{rmk}
One of the consequences of $(Z_{\beta,\alpha,s},\mathcal{A}^{\beta,\alpha})$ being a Bridgeland stability condition is that for all $E \in \mathcal{A}^{\beta,\alpha}$, $\Im(Z_{\beta,\alpha,s}(E))\geq 0$. 
\end{rmk}

Due to its nature, being generated by a double tilt, $\mathcal{A}^{\beta,\alpha}$ has objects with $\mathcal{H}^{-i}$ $\Coh$-cohomology equal to $0$ for $i \neq 0,1,2$ and we also have a $\mathcal{B}^\beta$-cohomology functor $\mathcal{H}^i_\beta: \Db \rightarrow \mathcal{B}^\beta$ such that these cohomology functors are related by the following distinguished triangles described in \cite{JM} for every object $E \in \mathcal{A}^{\beta,\alpha}$:
\begin{equation}\label{Exa-1}
 0 \rightarrow \mathcal{H}^{-1}_\beta(E)[1] \rightarrow E \rightarrow \mathcal{H}^0_\beta(E) \rightarrow 0
\end{equation}
\begin{equation}\label{Exa-2}
 0 \rightarrow \mathcal{H}^{0}(\mathcal{H}^{-1}_\beta(E)) \rightarrow \HH^{-1}(E) \rightarrow \mathcal{H}^{-1}(\mathcal{H}^0_\beta(E)) \rightarrow 0
\end{equation}
with $\mathcal{H}^{-2}(E)=\mathcal{H}^{-1}(\mathcal{H}^{-1}_\beta(E))$, $\mathcal{H}^0(E)=\mathcal{H}^0(\mathcal{H}^0_\beta(E))$, sequence \eqref{Exa-1} being exact in $\mathcal{A}^{\beta,\alpha}$ and sequence \eqref{Exa-2} being exact in $\Coh$.

\begin{rmk}
Let $E$ be an object in $\mathcal{A}^{\beta,\alpha}$ and also in $\mathcal{A}^{\beta',\alpha'}$, where $\beta \neq \beta'$ then $\mathcal{H}^i (E)$ does not vary with $\beta$ but $\mathcal{H}^{i}_\beta(E)$ is not necessarily equal to $\mathcal{H}^{i}_{\beta'}(E)$, thus $\ch_k(\mathcal{H}^{i}_{\beta}(E))$ may vary whenever we change $\beta$. One of the technical difficulties we will have to address is to prove that $\underset{t\rightarrow +\infty}{\lim}\ch_i(\mathcal{H}^{i}_{\beta(t)}(E))$ exists, whenever $\underset{t \rightarrow +\infty}{\lim}\beta(t)=\pm \infty$ . 

To exemplify this consider the sheaf $\opn$. Since $\opn$ is a $\mu$-stable sheaf we can see that $\opn \in \mathcal{B}^{\beta}$ when $\beta<0$ and $\opn[1] \in \mathcal{B}^{\beta}$, otherwise. Therefore, for negative $\beta$ we have $\mathcal{H}^{-1}_{\beta}(\opn[1])=\opn$ and $\mathcal{H}^{0}_\beta(\opn[1])=0$, conversely for positive $\beta$ we have $\mathcal{H}^{-1}_\beta(\opn[1])=0$ and $\mathcal{H}^{0}_\beta(\opn[1])=\opn[1]$. It is important to note that if $-\beta<\alpha$ and $\alpha>\beta$ then $\opn[1] \in \mathcal{A}^{\beta,\alpha}$, for either case of $\beta$ positive or negative.

\end{rmk}

The nature of $\mathcal{A}^{\beta,\alpha}$ implies certain conditions on its objects, one of those is expressed by the following lemma.

\begin{lemma}[\cite{JM}]\label{L-ref}
If $A$ is an object in $\mathcal{A}^{\beta,\alpha}$, then $\mathcal{H}^{-2}(A)$ is a reflexive sheaf of dimension $3$. 
\end{lemma}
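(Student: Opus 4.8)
The plan is to identify $\mathcal{H}^{-2}(A)$ with the lowest coherent cohomology of the $\mathcal{B}^\beta$-object $G := \mathcal{H}^{-1}_\beta(A)$ and to rule out any failure of reflexivity by producing a low-dimensional subsheaf that becomes an illegal subobject inside $G$. Write $F := \mathcal{H}^{-2}(A)$. By the identity $\mathcal{H}^{-2}(A) = \mathcal{H}^{-1}(\mathcal{H}^{-1}_\beta(A))$ recalled above, $F = \mathcal{H}^{-1}(G)$. Since $A \in \mathcal{A}^{\beta,\alpha}$ is the tilt of $\mathcal{B}^\beta$ at the torsion pair $(\mathcal{T}_{\beta,\alpha},\mathcal{F}_{\beta,\alpha})$, we have $G = \mathcal{H}^{-1}_\beta(A) \in \mathcal{F}_{\beta,\alpha} \subset \mathcal{B}^\beta$; and because $G \in \mathcal{B}^\beta$ its lowest cohomology satisfies $F = \mathcal{H}^{-1}(G) \in \mathcal{F}_\beta$. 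In particular $F$ is torsion-free, so either $F=0$ or $\dim \Supp F = 3$, and the content of the lemma is thus the reflexivity of $F$.

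First I would form the reflexive hull $0 \rightarrow F \rightarrow F^{\vee\vee} \rightarrow T \rightarrow 0$, where $T := F^{\vee\vee}/F$ is supported in codimension $\geq 2$, hence $\dim \Supp T \leq 1$ and $\ch_0(T)=\ch_1(T)=0$. I would then check that $F^{\vee\vee}$ again lies in $\mathcal{F}_\beta$: since $T$ has codimension $\geq 2$, the inclusion $F \hookrightarrow F^{\vee\vee}$ preserves rank and $\ch_1$, and intersecting the maximal $\mu$-destabilizing subsheaf of $F^{\vee\vee}$ with $F$ shows $\mu_{\max}(F^{\vee\vee}) = \mu_{\max}(F) \leq \beta$, so $F^{\vee\vee} \in \mathcal{F}_\beta$ and $F^{\vee\vee}[1] \in \mathcal{B}^\beta$. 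Likewise $F[1] \in \mathcal{B}^\beta$, while $T \in \mathcal{T}_\beta \subset \mathcal{B}^\beta$ because every torsion sheaf lies in $\mathcal{T}_\beta$.

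Next I would transport this sequence into $\mathcal{B}^\beta$. Shifting the defining triangle gives $F[1] \rightarrow F^{\vee\vee}[1] \rightarrow T[1] \xrightarrow{+1}$, and taking the long exact sequence of $\mathcal{B}^\beta$-cohomology — where $F[1],F^{\vee\vee}[1]$ sit in degree $0$ and $T[1]$ has $\mathcal{H}^{-1}_\beta(T[1]) = T$ — yields the short exact sequence $0 \rightarrow T \rightarrow F[1] \rightarrow F^{\vee\vee}[1] \rightarrow 0$ in $\mathcal{B}^\beta$. Composing the monomorphism $T \hookrightarrow F[1]$ with the canonical monomorphism $F[1] = \mathcal{H}^{-1}(G)[1] \hookrightarrow G$ (the $\mathcal{F}_\beta[1]$-part of $G$) exhibits $T$ as a subobject of $G$ in $\mathcal{B}^\beta$. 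Now $\ch_1^\beta(T) = \ch_1(T) - \beta\,\ch_0(T) = 0$, so $\nu_{\beta,\alpha}(T) = +\infty > 0$; this contradicts $G \in \mathcal{F}_{\beta,\alpha}$, whose defining property forces every subobject to have slope $\leq 0$. Hence $T=0$ and $F = F^{\vee\vee}$ is reflexive.

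The main obstacle I anticipate is the bookkeeping that keeps the whole reflexive-hull sequence inside the heart $\mathcal{B}^\beta$: one must verify both that $F^{\vee\vee}$ does not acquire a subsheaf of slope $>\beta$ (so that it stays in $\mathcal{F}_\beta$) and that passage to $\mathcal{B}^\beta$-cohomology genuinely produces $T$ as an honest subobject of $G$ rather than merely a subquotient. Everything else — the slope computation at the degenerate locus $\ch_1^\beta = 0$ and the final contradiction with membership in $\mathcal{F}_{\beta,\alpha}$ — is then immediate. The degenerate case $F=0$ is consistent with the statement once one reads ``reflexive sheaf of dimension $3$'' as applying to the nonzero lowest cohomology.
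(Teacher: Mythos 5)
The paper does not prove this lemma; it is quoted directly from \cite{JM}. Your argument is correct and complete, and it is the standard (indeed, essentially the same) proof: reduce to $F=\mathcal{H}^{-1}(\mathcal{H}^{-1}_\beta(A))\in\mathcal{F}_\beta$, note the reflexive hull quotient $T$ has $\ch_0=\ch_1=0$ and hence $\nu_{\beta,\alpha}(T)=+\infty$, and exhibit $T$ as a subobject of $\mathcal{H}^{-1}_\beta(A)\in\mathcal{F}_{\beta,\alpha}$ to force $T=0$; the bookkeeping steps you flag (that $F^{\vee\vee}$ stays in $\mathcal{F}_\beta$ and that $T$ is an honest subobject via $0\to T\to F[1]\to F^{\vee\vee}[1]\to 0$ composed with $\mathcal{H}^{-1}(G)[1]\hookrightarrow G$) are exactly right and you have handled them correctly.
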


\subsection{Distinguished Curves}\label{Sec-Cur}

In Bridgeland stability we have a decomposition of the space of stability conditions in walls and chambers, with respect to a fixed Chern character $v\in K(X)$, see \cite[Section 9]{Bri2}. This decomposition is important because the moduli of Bridgeland stable objects with fixed Chern character $v$ does not vary whenever we change the stability inside a chamber, only when crossing walls is that the moduli can change. In this section we define the tools necessary to study the geometry of the upper half-plane sections of walls inside the space of Bridgeland stability with respect to a fixed Chern character.

\begin{definition}
A \emph{numerical wall} inside the space of (weak)Bridgeland stability conditions with respect to an element $w \in \Gamma$ is the subset of  stability conditions $\sigma=(Z,\mathcal{A})$ with non trivial solutions to the equation $\phi_\sigma(v)=\phi_\sigma(u)$ for some $u \in \Gamma$.

A subset of a numerical wall is called an \emph{actual wall} if, for each point $\sigma=(\mathcal{A},Z)$ in this subset, there is a sequence of $\phi_\sigma$-semistable objects $0 \rightarrow A \rightarrow B \rightarrow C \rightarrow 0$ in $\mathcal{A}$ with $v(B)=w$ and $\phi_\sigma(A)=\phi_\sigma(B)=\phi_\sigma(C)$.  
\end{definition}

The previous definition is for the whole space of stability conditions,  we will focus on the subset of these walls cut by $\mathbb{H}$, that is, points on numerical walls that come from $\sigma_{\beta,\alpha,s}$($\sigma^t_{\beta,\alpha}$). These numerical and actual walls can be tricky to work in the case of Bridgeland stability conditions $\sigma_{\beta,\alpha,s}$ as they are plane curves in degree $4$ but they have nice behaviour along some notable curves as proven in \cite{JM} and \cite{Sch1}.

\begin{definition}
Let $w,v \in \Gamma$ and define the following curves:
\begin{itemize}
    \item  The numerical wall for $v$ and $w$ in $\mathbb{H}$ for $\sigma^t_{\beta,\alpha}$ is denoted by $\Sigma_{v,w}$, known as the $\nu$-wall associated with $v$ and $w$.
    \item  Let $s$ be a positive real number, the numerical wall for $v$ and $w$ in $\mathbb{H}$ for $\sigma_{\beta,\alpha,s}$ is denoted by $\Upsilon_{v,w,s}$, known as the $\lambda$-wall associated with $v$ and $w$.
    
    \item  $L_w:=\{(\beta,\alpha) \in \mathbb{H}| \ch_1(w)=\beta\ch_0(w)\}$. The space to the left of the line $L_w$ will be denoted by $L_w^+:=\{\ch_1^\beta(w)> 0\}$ and respectively the right-hand side will be $L_w^-:=\{\ch_1^\beta(w)<0\}$.
    \item  $\Theta_w:=\{(\beta,\alpha) \in \mathbb{H}| \Re(Z_{\beta,\alpha}^t(w))=0\}$. Theta may divide the plane in two regions $\Theta_w^+:=\{\Re(Z_{\beta,\alpha}^t(w))>0\}$ and $\Theta_w^-:=\{\Re(Z_{\beta,\alpha}^t(W))<0\}$.
    \item  $\Gamma_w:=\{(\beta,\alpha) \in \mathbb{H}| \Re(Z_{\beta,\alpha,s}(w))=0\}$.
\end{itemize}
\end{definition}

\begin{example}\label{Ex-walls}
Let $E$ be a $\mu$-semistable sheaf and consider what is necessary for $(\beta,\alpha) \in \mathbb{H}$ to imply either that $E \in \mathcal{B}^{\beta}$ or $E \in \mathcal{A}^{\beta,\alpha}$. The first observation is that $E \in \mathcal{B}^{\beta}$ whenever $\beta<\mu(E)$, to the left of $L_E$ in $\mathbb{H}$, and $E \in \mathcal{B}^{\beta}[-1]$ otherwise. If $E$ is $\nu_{\beta,\alpha}$-semistable then there are $3$ possiblities:
\begin{itemize}
    \item $E \in \mathcal{A}^{\beta,\alpha}$ if $(\beta,\alpha) \in L_w^+ \cap \Theta_w^+ $
    \item $E \in \mathcal{A}^{\beta,\alpha}[-1]$ if $(\beta,\alpha) \in (L_w^- \cap \Theta_w^-)\cup (L_w^+ \cap \Theta_w^-)$ or 
    \item $E \in \mathcal{A}^{\beta,\alpha}[-2]$ if $(\beta,\alpha) \in L_w^-\cap \Theta_w^+$.
\end{itemize}
\end{example}

\begin{figure}[htp]
    \centering
    \includegraphics[width=10cm]{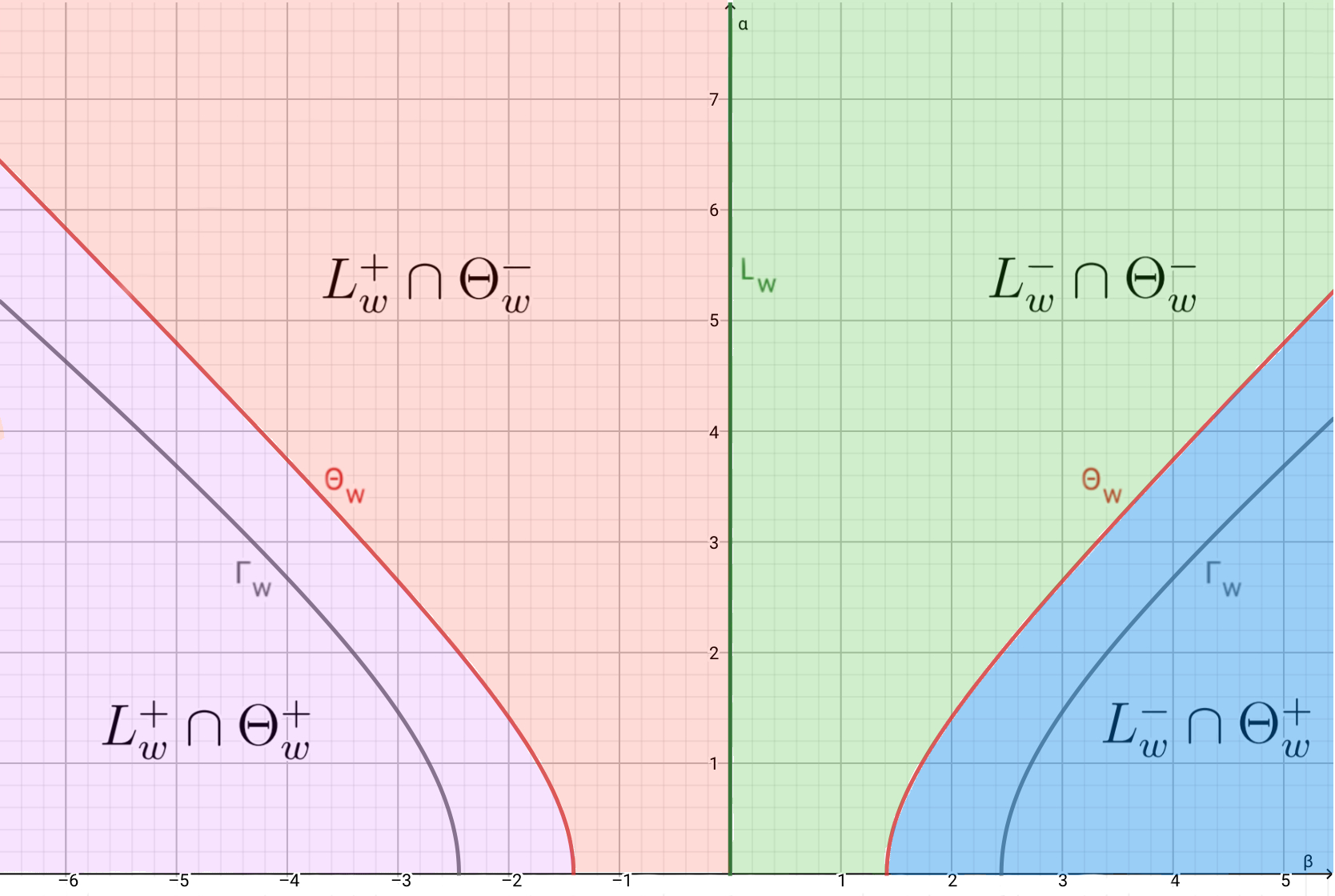}
    \caption{Distinguished curves related to $w=(2,0,-2,0)=\ch(I)$, where $I$ is an Instanton sheaf of charge $2$.}
\end{figure}

 In \cite{Sch1}, Schmidt uses the ideas in \cite{AMac} to prove a structure theorem for walls in tilt stability in threefolds. We restate it in a slightly different form that suits our purpose. 

\begin{theorem}\label{Struc-Til}(\cite{Sch1,AMac})
Fix a vector $w=(R,C,D,E) \in \Gamma$. The walls are with respect to $w$.

\begin{enumerate}[label=(\alph*)]
\item Numerical $\nu$-walls are of the form
\[
x \alpha^2 +x \beta^2 + y \beta +z = 0
\]
for $x= Rc - Cr$, $y= 2(Dr - Rd)$ and $z=2(Cd - Dc)$. In particular, they are all semicircles with center at the $\beta$-axis or vertical rays.
\item Two numerical $\nu$-walls intersect if and only if they are identical.
\item If $R \neq 0$, the curve $\Theta_w$ is given by the hyperbola  
\[
(\beta - C/R)^2 - \alpha^2 = \frac{Q^{\text{tilt}}(w)}{R^2}
\]
\item If a numerical $\nu$-wall is an actual $\nu$-wall for some point then it is an actual $\nu$-wall at every point.
\item If $\Sigma_{w,v}$ is a numerical $\nu$-wall then $\Sigma_{w,v}\cap\Theta_w=\Sigma_{w,v}\cap \Theta_v=\Theta_w \cap \Theta_v=\{p\}$ is the only point in the semi-circle $\Sigma_{w,v}$ with horizontal tangent space. 
\end{enumerate}
\end{theorem}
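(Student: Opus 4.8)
The plan is to treat the five items in increasing order of difficulty: derive (a) and (c) by direct computation, extract (b) and (e) from the bilinear structure of $Z_{\beta,\alpha}^t$, and reduce (d) to the wall-structure results of \cite{Sch1,AMac}.

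For (a) I would start from the definition: a point $(\beta,\alpha)$ lies on $\Sigma_{w,v}$ exactly when $\nu_{\beta,\alpha}(w)=\nu_{\beta,\alpha}(v)$, which after clearing denominators reads
\[
\Re(Z_{\beta,\alpha}^t(w))\,\Im(Z_{\beta,\alpha}^t(v))=\Re(Z_{\beta,\alpha}^t(v))\,\Im(Z_{\beta,\alpha}^t(w)).
\]
Substituting $\ch_1^\beta=C-\beta R$ and $\ch_2^\beta=D-\beta C+\tfrac12\beta^2 R$ (with the lowercase analogues for $v$) into $Z_{\beta,\alpha}^t$ and expanding, the apparent $\beta^3$ and $\alpha^2\beta$ terms cancel, leaving a conic whose coefficients are precisely the $2\times2$ minors of $\left(\begin{smallmatrix}R&C&D\\ r&c&d\end{smallmatrix}\right)$, namely $x=Rc-Cr$, $y=2(Dr-Rd)$, $z=2(Cd-Dc)$. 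Dividing by $x$ and completing the square exhibits a circle centred on the $\beta$-axis when $x\neq0$, and a vertical ray $\beta=-z/y$ when $x=0$. Part (c) is the same kind of computation: imposing $\Re(Z_{\beta,\alpha}^t(w))=0$ and completing the square in $\beta$ gives $(\beta-C/R)^2-\alpha^2=(C^2-2RD)/R^2$, and $C^2-2RD=\ch_1(w)^2-2\ch_0(w)\ch_2(w)=Q^{\text{tilt}}(w)$.

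The conceptual input for (b) is that $Z_{\beta,\alpha}^t$ factors through the projection $\pi(u)=(\ch_0(u),\ch_1(u),\ch_2(u))\in\Rr^3$ and is $\Rr$-linear in it; hence $u\mapsto Z_{\beta,\alpha}^t(u)$ is a (generically rank-$2$) linear map $\Rr^3\to\CC=\Rr^2$, and the coefficient vector $(z/2,y/2,x)$ of the wall is, up to order, the cross product $\pi(w)\times\pi(v)$. Consequently $\Sigma_{w,v}$ depends only on the plane $\mathrm{span}(\pi(w),\pi(v))$. If $\Sigma_{w,v_1}$ and $\Sigma_{w,v_2}$ share a point $p$ with $Z_p^t(w)\neq0$, then at $p$ the images $Z_p^t(w),Z_p^t(v_1),Z_p^t(v_2)$ are pairwise parallel, so $\pi(w),\pi(v_1),\pi(v_2)$ all lie in the preimage of a single line, a common plane; this forces $\mathrm{span}(\pi(w),\pi(v_1))=\mathrm{span}(\pi(w),\pi(v_2))$, and the two walls coincide. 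The only point needing separate attention is the exceptional one where $Z_p^t(w)=0$. For (e) I would first note that $\Theta_w=\{\Re(Z_{\beta,\alpha}^t(w))=0\}=\{\nu_{\beta,\alpha}(w)=0\}$ wherever $\Im(Z_{\beta,\alpha}^t(w))\neq0$. On $\Sigma_{w,v}$ one has $\nu_{\beta,\alpha}(w)=\nu_{\beta,\alpha}(v)$, so the locus where this common value vanishes is simultaneously $\Sigma_{w,v}\cap\Theta_w$, $\Sigma_{w,v}\cap\Theta_v$ and $\Theta_w\cap\Theta_v$, which yields the three equalities at once. To pin the locus down I substitute the circle of (a) into the hyperbola of (c); the resulting quadratic in $\beta$ has the apex $\beta=-y/(2x)$ as a root — verified by the identity $Rz+Cy+2Dx=0$, an alternating sum of minors — while its second root is $\beta=C/R$, which sits on $L_w$ where $Z^t(w)=0$ and gives $\alpha^2=-Q^{\text{tilt}}(w)/R^2\leq0$ once $w$ satisfies the Bogomolov inequality, hence no point of $\mathbb{H}$. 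So the intersection is the single apex point, the unique horizontal-tangent point of the semicircle.

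The main obstacle is (d), the one genuinely non-formal statement, for which I would lean on \cite{Sch1,AMac}. The strategy is a connectedness argument on the semicircle $W=\Sigma_{w,v}$: if $W$ is actual at $p_0$, witnessed by a short exact sequence $0\to A\to B\to C\to0$ in $\mathcal{B}^{\beta}$ with $v(B)=w$ and equal phases, then because $\pi(v(A))$ and $\pi(v(C))$ also lie in $\mathrm{span}(\pi(w),\pi(v))$, the numerical equality $\nu(A)=\nu(B)=\nu(C)$ persists along all of $W$. One then shows that the set of points of $W$ at which $B$ is strictly semistable is open (semistability is an open condition, and by (b) no further wall of the classes involved can meet $W$ without coinciding with it, so the stability type of $A,B,C$ is locally constant) and closed (a limit of equal-phase destabilising sequences is again one, using boundedness of the relevant families); connectedness of $W$ then forces the wall to be actual everywhere. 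The delicate point — and the reason I defer to the cited structure theorem — is checking that $A$ and $C$ stay in the heart $\mathcal{B}^{\beta}$ and remain $\nu$-semistable along the whole wall, i.e. that travelling along $W$ never drives them across one of their own walls; this is precisely where the non-crossing property (b) and the continuity of the tilt Harder--Narasimhan filtration are invoked.
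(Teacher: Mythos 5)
The paper does not actually prove this statement: Theorem \ref{Struc-Til} is quoted from \cite{Sch1,AMac} with no argument supplied, so there is no internal proof to compare against. Judged on its own merits, your reconstruction is essentially the standard proof from those references and the computational parts check out. For (a) the cross terms do cancel as you claim and the surviving coefficients are the $2\times 2$ minors $x=Rc-Cr$, $y=2(Dr-Rd)$, $z=2(Cd-Dc)$; for (c) one indeed gets $(\beta-C/R)^2-\alpha^2=(C^2-2RD)/R^2$ (note you are silently using the correct discriminant $\ch_1^2-2\ch_0\ch_2$, whereas the paper's displayed definition of $Q^{\text{tilt}}$ contains a typo). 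Your identity $Rz+Cy+2Dx=0$ is correct, and substituting the circle into $\Re(Z^t_{\beta,\alpha}(w))=0$ factors as $(C-R\beta)(2x\beta+y)=0$, which is exactly the dichotomy you describe in (e). The linear-algebra argument for (b) via the rank-$2$ map $\mathbb{R}^3\to\mathbb{C}$ and the plane $\mathrm{span}(\pi(w),\pi(v))$ is the argument of \cite{AMac}.

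Two caveats, both of which you flag but do not close. First, in (b) and (e) the exceptional point where $Z^t_p(w)=0$ sits at $\beta=C/R$, $\alpha^2=-Q^{\text{tilt}}(w)/R^2$; excluding it from $\mathbb{H}$ requires $Q^{\text{tilt}}(w)\geq 0$, which holds when $w$ is the class of a $\nu$-semistable object (the setting of actual walls) but is an unstated hypothesis for arbitrary $w\in\Gamma$, so as literally stated for numerical walls the claim needs either that hypothesis or a separate treatment of that point. Second, (d) is the only genuinely non-formal item, and your open-and-closed argument has the right shape, but the step you defer --- that the destabilising subobject and quotient remain in $\mathcal{B}^{\beta}$ and remain $\nu$-semistable along the whole semicircle, so that ``strictly semistable along $W$'' is both open and closed --- is precisely the content of Schmidt's proof and cannot be obtained from (b) alone; since the paper itself only cites this result, deferring here is acceptable, but be aware that this is where all the work is.
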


Theorem \ref{Struc-Til} takes care of the structure and geometry of the $\nu$-walls, at least in a general setting. To understand how numerical $\lambda$-walls behave we will direct to \cite{JM}, more specifically Sections $4$ and $6$. We will only need the equation that determines the numerical $\lambda$-wall for any $v,w \in \Gamma$ as in \cite{JM} and for that let \[\delta_{ij}(v,w)=\ch_i(v)\ch_j(w)-\ch_j(v)\ch_i(w).\] When the objects are known from the context we will denote $\delta_{ij}(v,w)$ by $\delta_{ij}$. 

A point $(\beta,\alpha) \in \mathbb{H}$ is in $\Upsilon_{v,w,s}$, for some $v,w \in \Gamma$ and $s>0$, if it satisfies
\[
f_{v,w}(\beta,\alpha,s)=\frac{6s+1}{12}\delta_{10}\alpha^4+\]\[\left(\frac{3s-1}{6}\delta_{10}\beta^2+\frac{1-3s}{3}\delta_{20}\beta+\frac{6s+1}{6}\delta_{21}-\frac{1}{2}\delta_{30} \right)\alpha^2+\]\[\left(\frac{1}{12}\delta_{10}\beta^4-\frac{1}{3}\delta_{20}\beta^3+ \frac{\delta_{30}+\delta_{21}}{2}\beta^2-\delta_{31}\beta+\delta_{32} \right).
\]

Throughout the rest of the paper we will fix the notation that $\gamma$ is an unbounded curve in $\mathbb{H}$ parametrized by $\gamma(t):=(\beta(t),\alpha(t))$ satisfying \begin{equation}\label{Eq-Unbou}
c_{\gamma}:=\underset{t \rightarrow +\infty}{\lim}{\frac{\alpha^2(t)}{\beta^2(t)}}<1.  \end{equation}

This condition is important because it is the one that guarantees that whenever $E \in \Coh$ we have that $\underset{t \rightarrow +\infty}{\lim}\nu_{\gamma(t)}(E)>0$ (see equation \eqref{Lim-nu}), in geometric terms it means that for every $E \in \Coh$ there is some $t_0\in \mathbb{R}$ such that when $t>t_0$, $\gamma(t) \in \Theta_E^+$ holds. In $\cite{JM}$ these curves are called $\Theta^-$-curves.

\subsection{Asymptotic stability}

As seen in the previous subsection, the geometry and structure of the walls for tilt stability in threefolds is described by Lemma \ref{Struc-Til}, but not much is known about the behaviour of these walls for Bridgeland stability conditions. This is due to their definition, being done so by zeros of a degree $4$ polynomial, and also that $\mathcal{A}^{\beta,\alpha}$ is much more complicated than $\mathcal{B}^{\beta}$, being a tilt of $\mathcal{B}^{\beta}$ it may have $3$-step object which do not occur in $\mathcal{B}^\beta$. 

One way to study the stability of objects in $\Db$, circumventing this difficulty, is proposed in \cite{JM} by trying to understand how objects behave asymptotically at infinity. It turned out to be closely related to sheaf Gieseker-stability. 

\begin{definition}\label{Def-Asy}
Let $\sigma_{\beta,\alpha}=(Z_{\beta,\alpha},C^{\beta,\alpha})$ be a famility of weak stability condition parametrized by $(\beta,\alpha) \in \mathbb{H}$ and $\phi_{\beta,\alpha}$ its slope.  For an object $A \in \Db$ to be asymptotic $\phi_\gamma$-(semi)stable it has to satisfy the following conditions:
\begin{enumerate}[label=(\alph*)]
    \item There exists $t_0 \in \mathbb{R}$ such that $A \in C^{\gamma(t)}$ for $t>t_0$,
    
    \item Suppose that there exists $t_1\geq t_0 \in \mathbb{R}$ such that $F \stackrel{f}{\hookrightarrow} A \in C^{\gamma(t)}$ for some $F \in \Db$ and every $t>t_1$, then exists $t_2 \in \mathbb{R}$ with $\phi_{\gamma(t)}(F) <(\leq) \phi_{\gamma(t)}(A)$ whenever $t>t_2$.
\end{enumerate}
\end{definition}

The cases we will be applying this definition are when $\sigma_{\beta,\alpha}$ are either tilt or Bridgeland stability conditions obtained by the tilt algorithm we described in the previous subsection, and $\phi_{\beta,\alpha}=\nu_{\beta,\alpha}$ or $\phi_{\beta,\alpha}= \lambda_{\beta,\alpha,s}$, for a fixed $s$, respectively. 

It is important to note that this definition is equivalent to using quotients instead of subobjects in item (b), because of the way exact sequences are defined in hearts of $t$-structures. Using the notation of Definition \ref{Def-Asy}, a sequence in $C^{\beta,\alpha}$ is exact if the corresponding triangle in $\Db$ is distinguished. Therefore, since we are fixing the map $f$, we know that $f$ is monomorphic in $C^{\beta,\alpha}$ if and only if its quotient in $C^{\beta,\alpha}$ is the cone $C(f) \in \Db$.

We finish the section by establishing the notion of Gieseker-Simpson (semi)stability and its variations as defined in \cite{SS} and \cite{JM}.

We will be using the nomenclature found in $\cite{HL}$ : let $E\in \Coh$ with dimension $d$, $E$ is called a pure sheaf if $\dim(F)=d$ for all subsheaves $F$ of $E$, similarly a torsion subsheaf of $E$ is any subsheaf $F$ with $\dim(F)<d$. In this situation, consider \linebreak $E^D:=\mathcal{E}xt^{3-d}(E,\opn)$ such that $E$ is said to be reflexive if $E\simeq E^{DD}$.

\begin{definition}
Let $E$ be a coherent sheaf over $X$ of dimension $d$ and $k$ be an integer $1 \leq k \leq d$. Let $P_E(n) = \chi( E \otimes \opn(n))$ be the Hilbert polynomial associated to $E$ with leading coefficient $\alpha_d$ then define

\[
p_{E,k}(t)= \sum_{i=d-k}^{d} (\alpha_i/\alpha_{d}) t^i
\]

We say that $E$ is $\gsk$-(semi)stable if $E$ is a pure sheaf and for every non zero subsheaf $A \hookrightarrow E$ we have $p_{E,k}(m) <(\leq) p_{E/A, k}(m)$ for $m\gg0$. If $k=\dim(E)$, a $\gsk$-(semi)stable $E$ is called Gieseker-(semi)stable and denote $p_{E,d}$ by $p_{E}$.
\end{definition}

It was observed by Schmidt that $\gsk$-stability implies $\gs_{k-1}$-stability and also $\gsk$-semistability implies $\gs_{k-1}$-semistability, for all $k\leq \dim(E)$. There is another, more useful in our situation, way of seeing $\gsk$-(semi)stability as a inequality involving the $\delta_{i,j}$ of the respective sheaves.

\begin{rmk}\label{Gie-delta}
We can use a factorization of $p_E(t)$ to establish an equivalent definition for $\gsk$-(semi)stability. By Grothendieck-Riemann-Roch we establish that for a $2$-dimensional pure sheaf $E$ and $F$ a subsheaf of $E$ we have
\[p_E(t)-p_F(t)= \frac{1}{\ch_1(F) \ch_1(E)}(\delta_{21}(E,F) x_1(t) + \delta_{31}(E,F)),\]
for some $x_1(t)$ linear polynomial. This implies that a $2$-dimensional pure sheaf $E$ is Gieseker-(semi)stable sheaf if and only if $(\delta_{21}(E,F),\delta_{31}(E,F))>(\geq) (0,0)$ in the lexicographical order. Similarly, $E$ is $\gs_1$-(semi)stable if and only if $\delta_{21}(E,F)>(\geq) 0$.

\end{rmk}

One example we will use throughout the paper is the structure sheaf of smooth subvarieties $S$ of $\mathbb{P}^3$. Let $i: S \hookrightarrow \mathbb{P}^3$ a smooth irreducible closed subscheme and $i_\ast\mathcal{O}_S$ the image of its structure sheaf in $\Coh$. We will show that $i_\ast\mathcal{O}_S$ is always Gieseker-stable. Before we start proving this, we will need to do a few observations about $i_\ast$ and $i^\ast$. Through the next remark define $E^\vee=R\mathcal{H}om(E,\opn)[2]$ for any $E\in \Db$.

\begin{rmk}\label{She-stru}
Since $S$ is a closed subscheme, then $i_\ast$ is an exact functor satisfying \linebreak $i^\ast(i_\ast(E))=E$ for all sheaves $E$ in $\text{Coh}(S)$. The functor $i^\ast$ is also exact as a consequence of $S$ being a smooth subscheme and, applying \cite[Section 3.4]{Huy}, we can define \[i^!(E):=i^\ast(E) \otimes \omega_i,\] where $\omega_i=\omega_S \otimes i^\ast \omega_{\mathbb{P}^3}[-c]$, with $c=\codim(S)$, $E \in \Db$, $\omega_S$ and $\omega_{\mathbb{P}^3}$ are the dualizing bundles of $S$ and $\mathbb{P}^3$, respectively. The last calculation we will need is\\ 
\hspace*{91pt}$i_\ast(i^\ast(E))=  i_\ast(i^!(E) \otimes \omega_i^\vee[-2])$ \hfill $(\omega_i \otimes \omega_i^\vee=\mathcal{O}_S[2])$\\
\hspace*{140pt}$ = i_\ast(i^!(E) \otimes i^\ast(i_\ast(\omega_i^\vee))[-2])$ \hfill $(i^\ast i_\ast=Id)$ \\
\hspace*{140pt}$ = i_\ast(i^!(E)) \otimes i_\ast(\omega_i^\vee)[-2]$  \hfill \text{(Projection Formula)}\\
\hspace*{140pt}$ = E \otimes i_\ast(\omega_i) \otimes i_\ast(\omega_i^\vee)[-2]$   \hfill (\text{Def. }$i^!$ \text{ and Proj. Formula}) \\
\hspace*{140pt}$ = E \otimes i_\ast \mathcal{O}_S$  \hfill$(i_\ast(\omega_i)\otimes i_\ast(\omega_i^\vee)=i_\ast\mathcal{O}_S[2])$

\end{rmk}

Let $F \overset{f}{\hookrightarrow} i_\ast\mathcal{O}_S$, $L=\coker(f)$ in $\Coh$ and consider the exact sequence, in $\text{Coh}(S)$, \[0 \rightarrow i^\ast(F) \rightarrow \mathcal{O}_S  \rightarrow i^\ast(L) \rightarrow 0.\] 

We have that $i^\ast(L)=\mathcal{O}_C$, for some subvariety $C$ of $S$, and since $S$ is irreducible we can see that $\dim(C)<\dim(S)$. Moreover, due to Remark \ref{She-stru}, this implies that $\ch_k(L)=0$, for $k=\codim(S)$, concluding our reasoning, because this implies that every subsheaf of $i_\ast\mathcal{O}_S$ satisfy the inequality in Remark \ref{Gie-delta}.

At last, let us remind a result proved in \cite[Lemma 1.7.9]{HL} and referenced as \emph{Grothendieck's Theorem}. It tell us about the boundedness of families of pure quotients of a given sheaf, and this will be important when proving the existence of the limits $\underset{t \rightarrow +\infty}{\lim}\ch_i(\mathcal{H}^j_{\beta(t)}(E))$. 

\begin{theorem}[Grothendieck Theorem]\label{Groth}
Let $E \in \Coh$ be a $d$-dimensional sheaf with $d>0$, Hilbert Polynomial $P$ and Mumford--Castelnuovo regularity $\reg(E)=p$. There exists $C$, depending only on $P$ and $p$, such that for every purely $d$-dimensional quotient $Q$ then $\ch_{4-d}^C(Q)\geq 0$. Moreover, the family of purely $d$-dimensional quotients $Q$ with $\ch_{4-d}^C(Q)$ bounded from above is a bounded family.
\end{theorem}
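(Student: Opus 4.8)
The plan is to recast both assertions as statements about a normalized slope and then reduce to a split bundle. Since $\ch_i(Q)=0$ for $i<3-d$ when $Q$ is $d$-dimensional, the twist unwinds to $\ch_{4-d}^C(Q)=\ch_{4-d}(Q)-C\,\ch_{3-d}(Q)$, so the first claim asserts exactly that the normalized slope $\hat\mu(Q):=\ch_{4-d}(Q)/\ch_{3-d}(Q)$ is bounded below by $C$ over all purely $d$-dimensional quotients, while the second is the classical principle that pure quotients of bounded slope form a bounded family. Because $\reg(E)=p$, the sheaf $E(p)$ is globally generated with $h^0(E(p))=P(p)$, giving a surjection $\opn(-p)^{\oplus N}\twoheadrightarrow E$ with $N=P(p)$ read off from $P$ and $p$. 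Every purely $d$-dimensional quotient of $E$ is then a quotient of $E_0:=\opn(-p)^{\oplus N}$, so I would prove both statements for $E_0$; the resulting constant depends only on $p$ and $N$, hence only on $P$ and $p$.

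For the lower bound I would induct on the dimension of the ambient space. In top degree $Q$ is torsion-free and $E_0$ is $\mu$-semistable of slope $-p$, so every quotient has $\mu(Q)\geq -p$, i.e. $\ch_1^{-p}(Q)\geq 0$, giving $C=-p$. For $Q$ of smaller dimension I would restrict to a general hyperplane $H\cong\mathbb{P}^2$ transverse to $\Supp(Q)$, so that $Q|_H$ is a purely $(d-1)$-dimensional quotient of $E_0|_H$ and $\ch(Q|_H)=\ch(Q)\,(1-e^{-H})$. This identity rewrites $\hat\mu(Q|_H)$ as $\hat\mu(Q)$ plus an explicit constant depending only on $p$ and $d$, so the inductive lower bound on $\hat\mu(Q|_H)$ descends to a lower bound on $\hat\mu(Q)$; tracking that additive constant is routine bookkeeping with the Chern-character identity.

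For boundedness I would apply Kleiman's criterion: a set of quotients of the fixed sheaf $E_0$ is bounded as soon as their Hilbert polynomials range over a finite set and the sheaves are uniformly regular. The two leading Hilbert coefficients are already controlled, since $\ch_{3-d}(Q)$ is a positive lattice element bounded above by $\ch_{3-d}(E)$ (additivity of the leading term in $0\to K\to E\to Q\to 0$), while the hypothesis $\ch_{4-d}^C(Q)\leq B$ together with the first part confines $\ch_{4-d}(Q)$ to a bounded interval; both therefore take finitely many values. What remains is to force the lower coefficients of the Hilbert polynomial into a finite set as well.

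This last point is the main obstacle, and it is where purity is indispensable. I would deduce a uniform Castelnuovo--Mumford regularity bound from a Le Potier--Simpson type estimate, bounding $h^0(Q(m))$ of a pure sheaf in terms of its rank, its slope, and $m$; with $\hat\mu(Q)$ pinched between the two bounds above, this caps $\reg(Q)$ by a constant depending only on the two leading coefficients and $N$. A uniform regularity bound pins the full Hilbert polynomial to finitely many values and exhibits all the $Q$ inside finitely many projective Quot schemes $\mathrm{Quot}(E_0,P')$, yielding boundedness. Phrasing this Simpson estimate cleanly in the twisted Chern-character formalism on $\pn$ is the genuine work; the rest is the dictionary between the $\ch_i^{\beta}$ and the coefficients of the Hilbert polynomial.
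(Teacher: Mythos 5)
First, a point of comparison: the paper does not actually prove Theorem \ref{Groth} --- it is quoted from \cite[Lemma 1.7.9]{HL} (``let us remind a result proved in\dots''), so your proposal can only be measured against the standard Huybrechts--Lehn argument. Your sketch is essentially that argument, and it is correct, in the case $d=3$: there $Q$ is torsion-free, $\mu$-semistability of $\opn(-p)^{\oplus N}$ gives $\mu(Q)\geq -p$, and the Kleiman/Le Potier--Simpson regularity argument for boundedness is the right one. This also happens to be the only case the paper invokes.

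For $d<3$, however, there is a genuine gap. The reduction ``every purely $d$-dimensional quotient of $E$ is a quotient of $E_0=\opn(-p)^{\oplus N}$, so I prove both statements for $E_0$'' discards the hypothesis $\dim Q=\dim E$, and the statement you then set out to prove for $E_0$ is \emph{false}: for every $r\geq 1$ the structure sheaf $\mathcal{O}_S$ of a degree-$r$ surface $S\subset\pn$ is a purely $2$-dimensional quotient of $\opn$ with $\ch_2(\mathcal{O}_S)/\ch_1(\mathcal{O}_S)=-r/2\to -\infty$, so no constant $C$ can bound the normalized slope below over all purely $2$-dimensional quotients of $E_0$. The same defect resurfaces in your hyperplane induction: the identity $\hat\mu(Q|_H)=\hat\mu(Q)-\tfrac12$ needs the component $\ch_{5-d}(Q|_H)$ to survive in the Chow ring of $H$, which forces $d\geq 2$ at each step; since restriction preserves the codimension of the support, a torsion quotient never becomes torsion-free on the smaller ambient space, and the induction bottoms out at pure $1$-dimensional sheaves where the relevant component is annihilated (a general plane section of $\mathcal{O}_C(-k)$ is a length-$\deg(C)$ skyscraper for every $k$), leaving no base case. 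The correct route keeps $E$ itself: pass to its purely $d$-dimensional part, use additivity of $(\ch_{3-d},\ch_{4-d})$ along $0\to K\to E\to Q\to 0$ --- which bounds the multiplicity $\ch_{3-d}(Q)\leq \ch_{3-d}(E)$ precisely because $\dim K\leq d$ --- and bound $\ch_{4-d}(K)$ from above via $\hat\mu_{\max}(E)$. You in fact do work with $E$ rather than $E_0$ in your boundedness step; the lower-bound step needs to do the same.
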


\section{Asymptotic $\nu_{\gamma}$-stability}

Due to its construction, $\sigma^t_{\beta,\alpha}=(Z_{\beta,\alpha}^t,\mathcal{B}^\beta)$ is much simpler than its Bridgeland counterpart, making it a great starting point to the study of asymptotic (semi)stable objects. For $\ch_0(E)=0$ objects we have to consider two cases: Either $\dim(\Supp(E))=2$ or $\dim(\Supp(E))\leq 1$, in the first case $\nu_\gamma$-stability will be equivalent to $\gs_1$-stability and on the latter case, every sheaf is $\nu_\gamma$-semistable. This is because $\nu_{\beta,\alpha}$ does not take into account $\ch_3$, making it a bad stability condition to distinguish low-dimensional sheaves.

    When we consider objects $E$ with $\ch_0(E)=0$ we are avoiding the existence of the canonical vertical wall $\{\beta=\mu(E)\}$, such wall is responsible for separating the regions $\{E \in \mathcal{B}^{\beta,\alpha}\}$ and $\{E \in \mathcal{B}^{\beta',\alpha'}[-1]\}$, if $E$ is a $\mu$-stable sheaf for example. This is the reason we are able to prove the same theorem for unbounded curves going either to the right or the left. We assume in this section that $\gamma$ is an unbounded curve satisfying $\underset{t \rightarrow +\infty}{\lim}{|\beta(t)|}=+\infty$.

\begin{prop}\label{Pro-nu}
Let $E \in \Db$ be an object with $\ch_0(E)=0$ and $\ch_1(E)\neq 0$. Then $E$ is asymptotic $\nu_\gamma$-(semi)stable if and only if it is $\gs_1$-(semi)stable.
\end{prop}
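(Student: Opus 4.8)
The plan is to push everything through the $\mathcal{B}^\beta$-cohomology and reduce the asymptotic condition to a single $t$-independent numerical comparison. The starting observation is that for any object with $\ch_0(E)=0$ the real and imaginary parts $-\ch_2^\beta(E)=-\ch_2(E)+\beta\ch_1(E)$ and $\ch_1^\beta(E)=\ch_1(E)$ do not involve $\alpha$, so
\[
\nu_{\gamma(t)}(E)=\frac{\ch_2(E)}{\ch_1(E)}-\beta(t).
\]
Hence, whenever $F$ is a rank-zero sheaf with $\ch_1(F)\neq 0$, the $\beta(t)$-terms cancel and
\[
\nu_{\gamma(t)}(E)-\nu_{\gamma(t)}(F)=\frac{\delta_{21}(E,F)}{\ch_1(E)\,\ch_1(F)},
\]
a quantity independent of $t$. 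Since $\ch_1$ is strictly positive on nonzero two-dimensional sheaves, Remark~\ref{Gie-delta} shows this sign comparison is exactly $\gs_1$-(semi)stability. So the real content is to prove that the subobjects one must test in Definition~\ref{Def-Asy}(b) are, asymptotically, precisely the nonzero subsheaves of a pure two-dimensional sheaf.

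First I would show that an asymptotically $\nu_\gamma$-semistable $E$ is a pure two-dimensional sheaf. From $E\in\mathcal{B}^{\gamma(t)}$ for $t\gg 0$ one has the tilt triangle $0\to\mathcal{H}^{-1}(E)[1]\to E\to\mathcal{H}^0(E)\to 0$ with $\mathcal{H}^{-1}(E)\in\mathcal{F}_{\beta(t)}$. If $\lim\beta(t)=-\infty$, then $\mathcal{H}^{-1}(E)\in\mathcal{F}_{\beta(t)}$ forces $\mu_{\max}(\mathcal{H}^{-1}(E))\le\beta(t)\to-\infty$, so $\mathcal{H}^{-1}(E)=0$ at once. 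If $\lim\beta(t)=+\infty$ I would instead test the subobject $\mathcal{H}^{-1}(E)[1]\hookrightarrow E$: a short computation using $c_\gamma<1$ gives $\nu_{\gamma(t)}(\mathcal{H}^{-1}(E)[1])\sim-\tfrac12(1-c_\gamma)\beta(t)$, which exceeds $\nu_{\gamma(t)}(E)\sim-\beta(t)$ for large $t$, contradicting semistability unless $\mathcal{H}^{-1}(E)=0$. Thus $E$ is a (torsion) sheaf. Purity follows because any subsheaf $T\subseteq E$ of dimension $\le 1$ has $\ch_1(T)=0$, hence $\Im Z^t_{\beta,\alpha}(T)=0$ and $T$ carries the maximal (infinite) phase $\nu>\nu_{\gamma(t)}(E)$, again destabilizing $E$. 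Testing the remaining subsheaves and applying the displayed slope formula yields $\delta_{21}(E,F)>(\ge)0$ for every proper nonzero subsheaf, i.e. $\gs_1$-(semi)stability.

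For the converse I would start from a $\gs_1$-(semi)stable, hence pure two-dimensional, sheaf $E$; being torsion it lies in $\mathcal{T}_\beta\subset\mathcal{B}^\beta$ for all $\beta$, so Definition~\ref{Def-Asy}(a) is automatic. The crux is identifying the admissible subobjects. Given $F\hookrightarrow E$ in $\mathcal{B}^{\gamma(t)}$ for all large $t$, the long exact $\Coh$-sequence of the triangle together with $\mathcal{H}^{-1}(E)=0$ forces $\mathcal{H}^{-1}(F)=0$, so $F$ is a sheaf in $\mathcal{T}_{\beta(t)}$ fitting in $0\to K\to F\to E$ in $\Coh$ with $K$ torsion-free and $K\in\mathcal{F}_{\beta(t)}$. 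I would then run the directional dichotomy: if $\beta(t)\to-\infty$ the condition $K\in\mathcal{F}_{\beta(t)}$ for all large $t$ forces $K=0$; if $\beta(t)\to+\infty$ the condition $F\in\mathcal{T}_{\beta(t)}$ for all large $t$ forces $F$ to have rank $0$, whence $K$ is a torsion-free subsheaf of a torsion sheaf and again $K=0$. Either way $F\hookrightarrow E$ is a genuine subsheaf, automatically pure two-dimensional, and the $t$-independent slope formula with Remark~\ref{Gie-delta} delivers $\nu_{\gamma(t)}(F)<(\le)\nu_{\gamma(t)}(E)$, verifying Definition~\ref{Def-Asy}(b).

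The main obstacle I anticipate is precisely this identification of the test subobjects, that is, showing that positive-rank or genuinely complex subobjects of $E$ in $\mathcal{B}^{\gamma(t)}$ cannot persist for all large $t$. This is exactly where the hypotheses $\lim|\beta(t)|=+\infty$ and $c_\gamma<1$ enter: through the opposite monotonicity of the torsion pair $(\mathcal{T}_\beta,\mathcal{F}_\beta)$ in the two directions and through the sign of the leading $\tfrac12(\beta^2-\alpha^2)$-term. Once the subobjects are pinned down the numerical step is immediate, because the slope comparison does not depend on $t$.
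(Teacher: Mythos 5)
Your proof is correct and follows essentially the same route as the paper: use the torsion pair $(\mathcal{T}_\beta,\mathcal{F}_\beta)$ together with $\lim|\beta(t)|=+\infty$ to force $\mathcal{H}^{-1}(E)=0$ (and to kill positive-rank pieces of the test subobjects), then observe that for rank-zero sheaves the slope difference $\delta_{21}(E,F)/(\ch_1(E)\ch_1(F))$ is independent of $t$ and invoke Remark \ref{Gie-delta}. The only departures are cosmetic: you handle the $\beta(t)\to+\infty$ direction by exhibiting $\mathcal{H}^{-1}(E)[1]$ as a destabilizing subobject instead of the paper's finite-$\mu$-slope membership contradiction, and you are somewhat more explicit than the paper about purity and the infinite-slope subsheaves with $\ch_1(F)=0$.
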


The version of this proposition related to the case $\ch_0(E)=\ch_1(E)=0$ is realized by knowing that $\Im(Z_{\beta,\alpha}^t(E))=\ch_1^\beta(E)=0$ for all $(\beta,\alpha) \in \mathbb{H}$, so that $E \in \mathcal{B}^{\beta,\alpha}$ for all $(\beta,\alpha)\in \mathbb{H}$ and $\nu_{\beta,\alpha}(F)=+\infty$ when $F \hookrightarrow E$ in $\BB$ 

\begin{proof}
To begin, assume that $E$ is asymptotic $\nu_\gamma$-(semi)stable. This implies a few properties about $E$: $E \in \BB$ for all $t$ greater than some $t_0$; $\HH^{i}(E)=0$ for all \linebreak $i\neq 0,-1$;  $\ch_0(\HH^{-1}(E))=\ch_0(\HH^0(E))$ and $\ch_1(\HH^{-1}(E))<\ch_1(\HH^0(E))$.
Let us prove that $\ch_0(\HH^{-1}(E))=\ch_0(\HH^0(E))=0$, if this was not the case then we would have that $\HH^{-1}(E)[1]$ and $\HH^0(E)$ are both objects in $\BB$, for all t sufficiently large, with finite $\mu$-slope which is a contradiction to $\underset{t \rightarrow +\infty}{\lim}{\beta(t)}=-\infty$ and $\underset{t \rightarrow +\infty}{\lim}{\beta(t)}=+\infty$, respectively.  Moreover, if non zero, then $\HH^{-1}(E)$ is a torsion-free sheaf because $E \in \BB$ for some $t$ and this implies that $\HH^{-1}(E)=0$, in either direction of $\gamma$, concluding that $E$ is a sheaf.

The next step is to prove its sheaf stability. Let \begin{equation}\label{Eq-seqnu}
    0 \rightarrow F \rightarrow E \rightarrow G \rightarrow 0
\end{equation} be an exact sequence in $\Coh$ and we can see that $\ch_0(F)=\ch_0(E)=\ch_0(G)=0$, implying that $F,E,G \in \BB$ and that sequence \eqref{Eq-seqnu} is also an exact in $\BB$, for all $t\in \mathbb{R}$. Therefore
\begin{equation}\label{Eq-nu}
\nu_{\gamma(t)}(E)-\nu_{\gamma(t)}(F)= \frac{\delta_{12}(E,F)}{\ch_1(F)\ch_1(E)},
\end{equation}
which is not dependent of $t$, proving the $\gs_1$-(semi)stability of $E$.

Assume now that $E$ is a $2$-dimensional sheaf $\gs_1$-(semi)stable. Being a $2$-dimensional implies that $E \in \BB$ for all $t$, since every quotient of $E$ in $\Coh$ is also a $2$-dimensional sheaf and therefore have infinite $\mu$-slope. Assume now that we have a sequence as \eqref{Eq-seqnu} but in $\BB$ such that $F,E,G \in \BB$ for all $t$ greater than some $t_0$, by applying the argument we started the proof it is clear that both $F$ and $G$ are sheaves and from \eqref{Eq-nu} we conclude asymptotic $\nu_\gamma$-(semi)stability.

\end{proof}

\section{Stability at $-\infty$}

We start the study of asymptotic stability by analyzing the left-hand side of $\mathbb{H}$. It turns out that asymptotic stability is much simpler on this side of the half-plane, because objects in $\Ag$, for $t$ sufficiently large, are coherent sheaves and asymptotic stability is equivalent to $\gs_k$-stability. Throughout this section we will be studying the unbounded curves established in \eqref{Eq-Unbou} but with a new condition: $\underset{t \rightarrow +\infty}{\lim}\beta(t)=-\infty$. 

The first result we prove is related to what kind of object can appear at infinity when considering $\Ag$, for $t$ sufficiently large. Turns out the large volume limit objects are exactly sheaves, and their (semi)stable objects the Gieseker-(semi)stable ones.

\begin{prop}\label{Pro-la1}
An object $E \in \Db$ is in $\Ag$ for every $t$ sufficiently large  if and only if $E \in \Coh$.
\end{prop}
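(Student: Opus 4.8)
The plan is to prove the two implications separately. The implication "$E\in\Coh \Rightarrow E\in\Ag$ eventually" carries the analytic difficulty, whereas the converse is essentially a cohomological computation, so I would dispatch the converse first.

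For the converse, assume $E\in\Ag$ for all large $t$. Since $\mathcal{A}^{\beta,\alpha}$ is a double tilt, $\HH^i(E)=0$ for $i\neq 0,-1,-2$, and $\HH^{-2}(E)=\HH^{-1}(\HH^{-1}_{\beta(t)}(E))$ lies in $\mathcal{F}_{\beta(t)}$ for every large $t$. By Lemma \ref{L-ref} this sheaf is either zero or reflexive of dimension $3$, hence of positive rank; but a fixed nonzero sheaf cannot have all of its subsheaves of $\mu$-slope $\leq\beta(t)$ once $\beta(t)\to-\infty$, so $\HH^{-2}(E)=0$. Knowing this, I would feed the truncation triangle $\HH^{-1}(E)[1]\to E\to\HH^0(E)$ into the $\mathcal{B}^{\beta(t)}$-cohomology long exact sequence: for $t$ large both fixed sheaves $\HH^{-1}(E),\HH^0(E)$ lie in $\mathcal{T}_{\beta(t)}$, which collapses the sequence to isomorphisms $\HH^{-1}_{\beta(t)}(E)\cong\HH^{-1}(E)$ and $\HH^0_{\beta(t)}(E)\cong\HH^0(E)$. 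Since membership $E\in\Ag$ forces $\HH^{-1}_{\beta(t)}(E)\in\mathcal{F}_{\beta(t),\alpha(t)}$, the fixed sheaf $\HH^{-1}(E)$ must have $\nu_{\gamma(t)}$-slope $\leq 0$ for all large $t$; but $\Lim\nu_{\gamma(t)}(F)>0$ for every nonzero sheaf $F$ (this is where $c_\gamma<1$ is used), so $\HH^{-1}(E)=0$ and $E=\HH^0(E)\in\Coh$.

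For the forward implication, let $E\in\Coh$. Because $\beta(t)\to-\infty$, for $t$ large every torsion-free quotient of $E$ has slope exceeding $\beta(t)$, so $E\in\mathcal{T}_{\beta(t)}\subset\mathcal{B}^{\beta(t)}$ with $\HH^0_{\beta(t)}(E)=E$. It then remains to show $E\in\mathcal{T}_{\beta(t),\alpha(t)}$, i.e. that no $\mathcal{B}^{\beta(t)}$-quotient $E\twoheadrightarrow Q$ has $\nu_{\gamma(t)}(Q)\leq 0$; equivalently, the minimal $\nu_{\gamma(t)}$-semistable Harder--Narasimhan factor of $E$ has positive slope. Writing $0\to\HH^{-1}(Q)\to K\to E\to\HH^0(Q)\to 0$ for the kernel $K$, the piece $\HH^0(Q)$ is a genuine quotient sheaf of $E$ with $\ch_0\geq 0$: when $\ch_0(\HH^0(Q))>0$ the leading $\tfrac12\beta^2(1-c_\gamma)\ch_0$ term forces $\nu_{\gamma(t)}>0$, and for the purely lower-dimensional quotients I would invoke Grothendieck's Theorem \ref{Groth} to bound the relevant family and hence obtain a uniform threshold.

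The main obstacle is the possibility that $\HH^{-1}(Q)\neq 0$, so that $\ch_0(Q)<0$, in which case the leading term changes sign and $Q$ could destabilize. Such a $Q$ requires $\HH^{-1}(Q)\in\mathcal{F}_{\beta(t)}$ with $\mu^+\leq\beta(t)\to-\infty$, i.e. arbitrarily negative slopes, and its existence is governed by the nonvanishing of maps out of the fixed sheaf $E$ into these very negative classes. The crux is to show these maps vanish for $t$ large: the Mumford--Castelnuovo regularity of $E$ (through Grothendieck's Theorem \ref{Groth}), together with the fact that the $\nu$-walls for $\ch(E)$ are bounded semicircles by Theorem \ref{Struc-Til} and that $\gamma$ escapes every bounded region as $\beta(t)\to-\infty$, rules out such destabilizing quotients past some $t_0$. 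Assembling these bounds yields $E\in\mathcal{T}_{\beta(t),\alpha(t)}\subset\Ag$ for $t>t_0$, completing the proof.
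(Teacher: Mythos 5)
Your backward implication ($E\in\Ag$ for $t\gg0$ implies $E\in\Coh$) is correct and runs essentially parallel to the paper: the elimination of $\HH^{-2}(E)$ via Lemma \ref{L-ref} is identical, your replacement of the paper's appeal to Grothendieck's Theorem by the observation that the fixed sheaves $\HH^{-1}(E)$ and $\HH^{0}(E)$ eventually lie in $\TT_{\beta(t)}$ (so that $\HH^{-1}(\HH^{0}_{\beta(t)}(E))\in\TT_{\beta(t)}\cap\FF_{\beta(t)}=0$) is a legitimate and arguably cleaner shortcut, and the final step is the limit computation \eqref{Lim-nu} exactly as in the paper.

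The forward implication, however, has a genuine gap, and it sits precisely where you place ``the crux''. You reduce to showing that no $\BB$-quotient $E\twoheadrightarrow Q$ with $\HH^{-1}(Q)\neq 0$ satisfies $\nu_{\gamma(t)}(Q)\leq 0$ for large $t$, and then assert that this follows from Theorem \ref{Struc-Til} together with the fact that $\gamma$ escapes bounded regions. That does not work as stated: Theorem \ref{Struc-Til} only says that numerical $\nu$-walls are semicircles or vertical rays; it gives no uniform bound on their radii (by part (e) their apexes sweep the unbounded hyperbola $\Theta_E$), no finiteness statement for the actual walls met by $\gamma$, and even the existence of a last wall would not by itself determine the sign of the minimal $\nu_{\gamma(t)}$-Harder--Narasimhan slope of $E$ beyond it. A milder version of the same uniformity problem already affects your sheaf-quotient case: the threshold after which $\nu_{\gamma(t)}(Q)>0$ depends on $\ch_1(Q)$ and $\ch_2(Q)$, which range over an a priori unbounded set, and you only invoke Theorem \ref{Groth} for the lower-dimensional quotients. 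The paper avoids this entire analysis: it filters $E$ by its torsion filtration and (Gieseker) Harder--Narasimhan factors, shows each semistable factor lies in $\Ag$ for $t\gg0$ --- using \eqref{Lim-nu} for the positive-rank and low-dimensional pieces and Proposition \ref{Pro-nu} for the $2$-dimensional torsion pieces, whose asymptotic $\nu_\gamma$-semistability together with $\nu_{\gamma(t)}\to+\infty$ places them in $\TT_{\beta(t),\alpha(t)}$ --- and then concludes by extension-closure of the heart. If you want to keep your direct approach you must actually produce the uniform bounds on the destabilizing classes, which amounts to re-proving the large-volume-limit statement; the reduction to semistable factors is what makes the verification finite.
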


\begin{proof}
Let us start assuming that $E \in \Ag$ for $t$ sufficiently large. We know that $\HH^{-j}(E)=0$ for all $j \neq 0,1,2$ and suppose that $\HH^{-2}(E) \neq 0$, by Lemma \ref{L-ref} we know that $\ch_0(\HH^{-2}(E))>0$ but that would be impossible because $\HH^{-2}(E) \in \FF_{\beta(t)}$, whenever $E \in \Ag$, an absurd because it implies that 
\[0 \geq \underset{t \rightarrow +\infty}{\lim}{\ch_1^{\beta(t)}(\HH^{-2}(E))}=\underset{t \rightarrow +\infty}{\lim}{\ch_1(\HH^{-2}(E))-\beta(t)\cdot\ch_0(\HH^{-2}(E))}=+\infty.\]

Therefore $\HH^{-2}(E)=0$. Moreover, consider $\HH^{-1}(E)$ decomposed by the following exact sequence in $\Coh$
\begin{equation}
    0 \rightarrow \HH^0(\HH^{-1}_{\beta(t)}(E)) \rightarrow \HH^{-1}(E) \rightarrow \HH^{-1}(\HH^0_{\beta(t)}(E)) \rightarrow 0,
\end{equation}
which can change for each value of $t$. Again, applying Grothendieck's Theorem we have that exists $C \in \mathbb{R}$ such that $\ch_1^C(Q)>0$ for all quotients $Q$ of $\HH^{-1}(E)$. If $\HH^{-1}(\HH^0_{\beta(t)}(E))\neq 0$, because it is in $\FF_{\beta(t)}$, it needed to satisfy $\ch_1^{\beta(t)}(\HH^{-1}(\HH^0_{\beta(t)}(E))) \leq 0$ for sufficiently large $t$, which is impossible. Concluding that $\HH^{-1}(\HH^0_{\beta(t)}(E))=0$.

It is clear now that $\HH^{-j}_{\beta(t)}(E)$ are, eventually, constant with respect to $t$. Fixing $t_0$ as the value for which $\HH^{-j}_{\beta(t)}(E)$ are constant and $E \in \Ag$ for every $t>t_0$. In this ray, if $\HH^{-1}_{\beta(t)}(E)=\HH^0(\HH^{-1}_{\beta(t)}(E))$ is non zero we would have
\begin{equation}\label{Lim-nu}
\underset{t \rightarrow +\infty}{\lim}\frac{1}{(-\beta(t))}\nu_{\gamma(t)}(\HH^{-1}_{\beta(t)}(E))= \left\{
	\begin{array}{lll}
	    (1-c_\gamma)  & \mbox{if } \ch_0(\HH^{-1}_{\beta(t)}(E)) \neq 0, \\
		1 & \mbox{if } \ch_0(\HH^{-1}_{\beta(t)}(E))=0\text{, } \ch_1 \neq 0, \\
		+\infty & \text{otherwise}.
	\end{array}
    \right.
\end{equation}
and all possible results contradict $\HH^{-1}_{\beta(t)}(E) \in \FF_{\gamma(t)}$ for $t>t_0$, because $c_\gamma<1$. \\

Now we turn to the case $E \in \Coh$. Using the Harder--Narasimhan filtration for coherent sheaves we obtain a filtration
\[
E_0\subset E_1 \subset E_2 \subset ... \subset E_n=E
\]
where $E_0$ is the maximal torsion subsheaf of $E$, $\Tilde{E}_i:=E_i/E_{i-1}$ are Gieseker-semistable sheaves with reduced Hilbert polynomial $p_i$ and they satisfy $p_i>p_{i+1}$, for $i>0$. If $\ch_1(E_0) \neq 0$ we can determine a Harder--Narasimhan filtration for $E_0$ as $(E_0)_i$, satisfying the same conditions with $\widetilde{(E_0)}_i$ as its Gieseker-semistable sheaves, and in this case the dimension for the maximal torsion sheaf $(E_0)_0$ is at most $1$, therefore $(E_0)_0$ is in $\Ag$ for all $t$. Using the equalities in \eqref{Lim-nu} and Proposition \ref{Pro-nu} we can conclude that all $\widetilde{(E_0)}_i$ are in $\Ag$ for $t>t_0$ and $i>0$, for some $t_0$.

Inductively, all $(E_0)_i$ are in $\Ag$ as they are extensions of $(E_0)_{i-1}$ and $\widetilde{(E_0)}_i$. Applying the same argument to $\Tilde{E}_i$ and $E_i$ we conclude that $E \in \Ag$. If $\ch_1(E_0)=0$ then $E_0 \in \Ag$ and we can skip using its Harder--Narasimhan filtration to prove that $E \in \Ag$.

\end{proof}

As in the previous section, we are able to provide a characterization of asymptotic stability but now in the case of $\phi_\gamma=\lambda_\gamma$. It is important to note that the proof only relies on $c_\gamma<1$ when we apply Proposition \ref{Pro-la1}, implying that this condition is necessary for controlling the structure of the objects and not their $\lambda$-slope. 

\begin{mthm}\label{Teo-Left}
Suppose that $E \in \Db$ with $\ch_0(E)=0$, for $E$ to be asymptotic $\lambda_{\gamma,s}$-(semi)stable it is necessary and sufficient that $E \in \Coh$ is a Gieseker-(semi)stable sheaf.
\end{mthm}

\begin{proof}
From Proposition \ref{Pro-la1} it is known that $E \in \Coh$ is equivalent to $E \in \Ag$ for all $t$ bigger than some $t_0$. Therefore, $F \hookrightarrow E \twoheadrightarrow G$ is an exact sequence in $\Ag$ for all $t$ sufficiently large if and only if it is an exact sequence in $\Coh$. So, in either one of the implications of the theorem we know that $E\in\Coh$ and let $t_0$ be such that $E \in \Ag$ for all $t>t_0$.

Suppose that $E$ has a torsion subsheaf $F$, by the previous observation it is clear that $F$ is also a subobject of $E$ in $\Ag$ for all $t>t_1\geq t_0$, for some $t_1$, and if $\ch_2(F)\neq 0$ then
\[
\underset{t \rightarrow +\infty}{\lim}\frac{1}{(-\beta(t))}(\lambda_{\gamma(t),s}(E)-\lambda_{\gamma(t),s}(F))=(-1)\left(\left(s+\frac{1}{6}\right)c_\gamma + \frac{1}{2}\right)<0,
\]
which would contradict asymptotic $\lambda_{\gamma,s}$-(semi)stability. The case where $\ch_2(F)=0$ would also contradict because $\lambda_{\gamma(t),s}(F)=+\infty$ for all $t$.

Now in both implications of the theorem $E \in \Coh$ and is a pure sheaf. We just have to compare their stabilities. This is done by the following inequalities for the case $\ch_1(E)\neq 0$:

If $\delta_{21}(E,F)\neq 0$:
\[
\underset{t \rightarrow +\infty}{\lim}(\lambda_{\gamma(t),s}(E)-\lambda_{\gamma(t),s}(F))=\delta_{12} \frac{\left(s+\frac{1}{6}\right)c_\gamma + \frac{1}{2}}{\ch_1(E)\ch_1(F)}\geq 0.
\]

If $\delta_{21}(E,F)=0$ and $\delta_{31}(E,F)\neq 0$:
\[
\underset{t \rightarrow +\infty}{\lim}{(-\beta(t)) \cdot (\lambda_{\gamma(t),s}(E)-\lambda_{\gamma(t),s}(F))}=\frac{\delta_{31}}{\ch_1(E)\ch_1(F)} \geq 0.
\]

If both $\delta_{21}$ and $\delta_{31}$ are zero then $\lambda_{\gamma(t),s}(E)=\lambda_{\gamma(t),s}(F)$ for all $t$.

For the case where $\ch_1(E)=0$ and $\ch_2(E)\neq 0$ we have:
\[
\underset{t \rightarrow +\infty}{\lim}{\frac{1}{(-\beta(t))}(\lambda_{\gamma(t),s}(E)-\lambda_{\gamma(t),s}(F))}=\frac{\delta_{31}}{\ch_2(E)\ch_2(F)}\geq 0.
\]

The equivalence is proved using the above inequalities and Remark \ref{Gie-delta}.

\end{proof}

\begin{example}
By the discussion after Remark \ref{Gie-delta}, we know that $i_\ast\mathcal{O}_S$ is Gieseker-stable for any $i:S \rightarrow \mathbb{P}^3$ smooth subvariety of $\mathbb{P}^3$. Therefore, using Main Theorem \ref{Teo-Left}, it is clear that $i_\ast\mathcal{O}_S$ is asymptotic $\lambda_\gamma$-stable. If $S=H$ is a hyperplane in $\mathbb{P}^3$ we have the defining distinguished triangle \begin{equation}\label{Triangle}
 \opn \rightarrow i_\ast\mathcal{O}_H \rightarrow \opn(-1)[1] \rightarrow \opn[1]\end{equation}
and since $\mathcal{O}(k)[i]$ are both Tilt and Bridgeland stable by \cite[Proposition 4.1]{Sch1} whenever they are in the correct space according to Example \ref{Ex-walls}, we can determine actual walls for $i_\ast\mathcal{O}_H$:

\begin{figure}[htp]
    \centering
    \includegraphics[width=13cm]{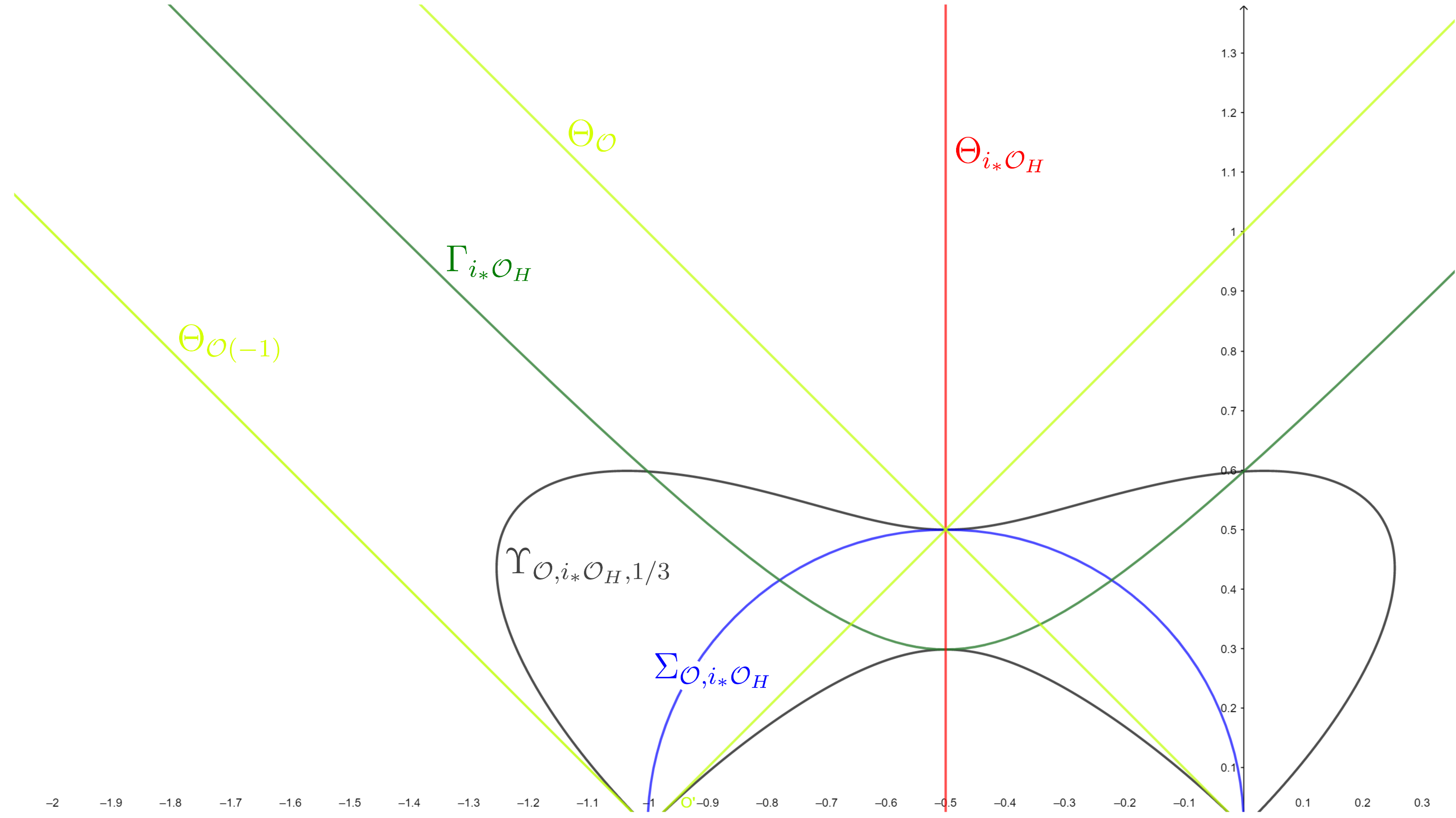}
    \caption{Distinguished curves and walls related to $i_\ast\mathcal{O}_H$ and triangle \eqref{Triangle}, in both tilt and Bridgeland stability, for $s=1/3$.}
\end{figure}

 From this we conclude that $i_\ast\mathcal{O}_H$ is stable right after crossing $\Gamma_{\opn,i_\ast\mathcal{O}_H,s}$ for every $s>0$. This reasoning also works for any hypersurface of degree $d$. We do not know if there are other actual walls destabilizing and stabilizing $i_\ast\mathcal{O}_H$, in spite of knowing that if you go further left enough $i_\ast\mathcal{O}_H$ will be stable because it is asymptotic $\lambda$-stable.
\end{example}

\section{Stability at $+\infty$}\label{Plus-inf}

We turn our attention to the right-hand side of the half-plane $\mathbb{H}$, for that assume the unbounded curve $\gamma$ satisfies: $c_\gamma<1$ and $\underset{t \rightarrow +\infty}{\lim}\beta(t)=+\infty$. 

In this case,  asymptotic $\lambda_\gamma$-stability is more involved than the left-hand side, because the objects $E \in \Ag$, for $t\gg0$, are not sheaves but can be factored by derived duals of sheaves in $\Ag$.  This will be enough to prove a relation between asymptotic $\lambda_\gamma$-stability with Gieseker-stability.

We apply spectral sequences to find conditions for when an object in $\Db$ comes from the derived dual functor $(-)^\vee:=R{\HH}om(-,\opn)[2]$ of a pure sheaf, making the calculations more elaborate. The case where $\ch_0(E)\neq 0$ was already described by Jardim and Maciocia with conditions when this happens in \cite[Main Theorem 3]{JM}.

We will provide these conditions for the case $\ch_0(E)=0$ and $\ch_1(E)\neq 0$. If both $\ch_0(E)=\ch_1(E)=0$, the derived dual applied to a pure sheaf $E$ satisfying these conditions is equal to $E^D:=\mathcal{E}xt^d(E,\opn)$, where $d=\codim(E)$, and as we will see the left-hand and right-hand side of the upper half-plane, in this case, have the same asymptotic $\lambda_\gamma$-(semi)stable objects. 

\begin{lemma}\label{Lem-SpecL}
Let $E \in \Db$ satisfying:
\begin{enumerate}[label=(\alph*)]
    \item $\HH^{i}(E)=0$ if $i \neq 1,0$;
    \item $F=\HH^{-1}(E)$ is a reflexive sheaf of dimension $2$;
    \item $G= \HH^{0}(E)$ is a dimension $0$ sheaf;
    \item The natural map $f:\mathcal{E}xt^1(F,\opn)\rightarrow \mathcal{E}xt^3(G,\opn)$ is an epimorphism.
\end{enumerate}

\noindent if and only if $E^\vee= \ker(f)$ is a pure sheaf of dimension $2$.
\end{lemma}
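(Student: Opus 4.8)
The plan is to compute the derived dual $E^\vee=R\mathcal{H}om(E,\opn)[2]$ explicitly from the two cohomology sheaves $F=\HH^{-1}(E)$ and $G=\HH^0(E)$ (I read condition (a) as $\HH^i(E)=0$ for $i\neq -1,0$, which is what (b) and (c) require) and to read off the statement from the surviving cohomology. The natural device is the second hyper-$\mathcal{E}xt$ spectral sequence
\[
E_2^{p,q}=\mathcal{E}xt^p(\HH^{-q}(E),\opn)\ \Longrightarrow\ \mathcal{H}^{p+q}(R\mathcal{H}om(E,\opn)),
\]
whose only possibly nonzero rows are $q=0$ (from $G$) and $q=1$ (from $F$). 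Since $G$ is zero-dimensional, $\mathcal{E}xt^p(G,\opn)=0$ for $p\neq 3$, so the row $q=0$ contributes only $E_2^{3,0}=\mathcal{E}xt^3(G,\opn)$. Since $F$ is reflexive of dimension $2$, the codimension bounds for the $\mathcal{E}xt$-sheaves of a reflexive sheaf force $\mathcal{E}xt^p(F,\opn)=0$ for $p\geq 2$, while $\mathcal{E}xt^0(F,\opn)=0$ by purity; this leaves only $E_2^{1,1}=\mathcal{E}xt^1(F,\opn)=F^D$, which is pure of dimension $2$. Thus the $E_2$-page has exactly two nonzero entries, at $(1,1)$ and $(3,0)$.

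The only possible differential is $d_2\colon E_2^{1,1}\to E_2^{3,0}$, a canonical map $F^D\to\mathcal{E}xt^3(G,\opn)$, and the first key step is to identify it with the natural map $f$ of hypothesis (d). I would do this by dualizing the canonical triangle $F[1]\to E\to G\to F[2]$: applying $R\mathcal{H}om(-,\opn)$ yields $R\mathcal{H}om(G,\opn)\to R\mathcal{H}om(E,\opn)\to R\mathcal{H}om(F,\opn)[-1]\to$, whose connecting morphism, after inserting $R\mathcal{H}om(G,\opn)=\mathcal{E}xt^3(G,\opn)[-3]$ and $R\mathcal{H}om(F,\opn)=F^D[-1]$, is exactly a map $F^D\to\mathcal{E}xt^3(G,\opn)$; this is $f$. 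The spectral sequence then degenerates at $E_3$, giving $\mathcal{H}^2(R\mathcal{H}om(E,\opn))=\ker f$ and $\mathcal{H}^3(R\mathcal{H}om(E,\opn))=\coker f$, so after the shift by $2$ one obtains $\HH^0(E^\vee)=\ker f$ and $\HH^1(E^\vee)=\coker f$, all other cohomologies vanishing.

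With this in hand the equivalence is essentially formal. The complex $E^\vee$ is a genuine sheaf (concentrated in degree $0$) precisely when $\coker f=0$, i.e. when $f$ is an epimorphism, which is condition (d); and in that case $E^\vee=\ker f$. For the forward direction it then remains to check purity: $\ker f$ is a subsheaf of the pure two-dimensional sheaf $F^D$, hence is itself pure, and since the image of $f$ is zero-dimensional the kernel retains dimension $2$. For the converse, if $E^\vee$ is a pure sheaf of dimension $2$ then it is concentrated in degree $0$, so $\coker f=0$ and (d) holds; and if one does not take (a)--(c) as standing hypotheses, biduality $(E^\vee)^\vee\simeq E$ together with the analogous $\mathcal{E}xt$-computation for a pure two-dimensional sheaf recovers (a), (b) and (c).

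I expect the principal obstacle to be the clean identification of the differential $d_2$ with the natural map $f$ in (d) (including checking it is genuinely the canonical connecting map, not merely a map with the same source and target), together with the input that a reflexive sheaf of dimension $2$ has $\mathcal{E}xt^p(F,\opn)=0$ for all $p\neq 1$---this vanishing is exactly what collapses the $E_2$-page to two entries and makes the whole argument work. The remaining purity and dimension bookkeeping for $\ker f$ is then routine.
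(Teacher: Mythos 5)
Your proof is correct and follows essentially the same route as the paper: decompose $E$ by the triangle $F[1]\to E\to G\to F[2]$, run the hyper-$\mathcal{E}xt$ spectral sequence $E_2^{p,q}=\mathcal{E}xt^p(\HH^{-q}(E),\opn)\Rightarrow\HH^{p+q-2}(E^\vee)$ whose only differential is the map $f$, and use biduality plus the $\mathcal{E}xt$-codimension bounds for a pure $2$-dimensional sheaf for the converse. You are in fact more explicit than the paper about why the $E_2$-page collapses to the two entries $(1,1)$ and $(3,0)$ and about identifying $d_2$ with the natural map $f$.
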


\begin{proof}
As in \cite[Lemma 2.14]{JM}, assuming $E$ satisfy the conditions described, we can decompose $E$ in the distinguished triangle

\begin{equation}\label{Eq-Esp}
F[1] \rightarrow E \rightarrow G \rightarrow F[2].
\end{equation}

Applying the cohomological functor $R^0\HH om(-,\opn)$ to \eqref{Eq-Esp} we find that $E^\vee=\ker(f)$. To see that $E^\vee=\ker(f)$ imply these properties for $A$ we just have to dualize $\ker(f)$ and see that $\HH^i(E)=\HH^{i}((\ker(f))^\vee)=\mathcal{E}xt^{i+2}(\ker(f),\opn)$ satisfy (a),(b) and (c) because $\ker(f)$ is a pure 2-dimensional sheaf. To see property (d) we apply the spectral sequence
\[
E^{p,q}_2=\mathcal{E}xt^p(\HH^{-q}(E),\opn) \implies \HH^{p+q-2}(\ker(f))
\]
and for this convergence to happen we need the map $f$ to be an epimorphism.
\end{proof}

\begin{rmk}
The case where $\ch_0(E)=\ch_1(E)=0$ is easier to see that $E=A^\vee$, for some pure sheaf $A$ with $\dim(A) \leq 1$, if and only if $\mathcal{H}^i(E)=0$ for $i \neq 0$ and $\mathcal{H}^{0}(E)$ is a pure sheaf with dimension less or equal to $1$. This is because in dimension less or equal to $1$, being a pure sheaf is the same as being reflexive, see \cite[Proposition 1.1.10]{HL}.
\end{rmk}

Next we find conditions every object in $\Db$ has to satisfy in order to be in $\Ag$ for every $t\gg0$. We will need a "right-hand side" version of the equality in display \eqref{Lim-nu}, consider $F \in \Db$ and we would have

\begin{equation}\label{Lim-nu-right}
\underset{t \rightarrow +\infty}{\lim}\frac{1}{\beta(t)}\nu_{\gamma(t)}(F)= \left\{
	\begin{array}{lll}
	    (c_\gamma-1)  & \mbox{if } \ch_0(F) \neq 0, \\
		-1 & \mbox{if } \ch_0=0\text{,} \ch_1(F) \neq 0, \\
		+\infty & \text{otherwise}.
	\end{array}
    \right.
\end{equation}

This equation, for the case $\ch_0\neq 0$, is what justifies the need of the condition $c_\gamma<1$.

\begin{lemma}\label{Lem-CondL}
Suppose $E \in \Db$ is in $\Ag$ for all $t$ sufficiently high. Therefore,

\begin{itemize}
    \item $\HH^{-2}(E)=0$,
    \item $\HH^{-1}(E)=\HH^0(\HH^{-1}_{\beta(t)}(E))$ is either a pure $2$-dimensional sheaf or zero,
    \item $\HH^0(E)=\HH^0_{\beta(t)}(E)$ with $\dim(\HH^0(E))\leq 1$ .
\end{itemize}
\end{lemma}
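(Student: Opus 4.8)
The plan is to determine the ordinary cohomology sheaves $\HH^{i}(E)$ one at a time, working under the running hypothesis $\ch_0(E)=0$ of this section. The essential difficulty, flagged in the remark before Lemma \ref{L-ref}, is that the $\BB$-cohomology objects $\HH^{-1}_{\beta(t)}(E)$ and $\HH^{0}_{\beta(t)}(E)$ vary with $t$. I would neutralize this by anchoring everything to the $\beta$-\emph{independent} sheaves $\HH^{-1}(E)$ and $\HH^{0}(E)$: by \eqref{Exa-2} the sheaf $\HH^{0}(\HH^{-1}_{\beta(t)}(E))$ is a subsheaf of the fixed sheaf $\HH^{-1}(E)$ and $\HH^{-1}(\HH^{0}_{\beta(t)}(E))$ is a quotient of it, while $\HH^{0}(E)=\HH^{0}(\HH^{0}_{\beta(t)}(E))$ and $\HH^{-2}(E)=\HH^{-1}(\HH^{-1}_{\beta(t)}(E))$ are themselves fixed. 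Membership $E\in\Ag$ gives $\HH^{-1}_{\beta(t)}(E)\in\FF_{\beta,\alpha}$ and $\HH^{0}_{\beta(t)}(E)\in\TT_{\beta,\alpha}$, and I will repeatedly use that $\nu_{\gamma(t)}$ depends only on the Chern character, so that \eqref{Lim-nu-right} applies to any object whose Chern character is constant in $t$.

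First I would treat $\HH^{0}(E)$. It is a quotient of $\HH^{0}_{\beta(t)}(E)\in\TT_{\beta,\alpha}$, hence $\nu_{\gamma(t)}(\HH^0(E))>0$ for $t\gg 0$; being a fixed sheaf, \eqref{Lim-nu-right} forces $\ch_0=\ch_1=0$, i.e. $\dim\HH^0(E)\le 1$. Next I would show that $\HH^{0}(\HH^{-1}_{\beta(t)}(E))$ has rank $0$: it lies in $\TT_{\beta(t)}$, so if it had positive rank its minimal slope would exceed $\beta(t)$, whereas being a subsheaf of the fixed sheaf $\HH^{-1}(E)$ its slopes are bounded above by $\mu_{\max}(\HH^{-1}(E))$; since $\beta(t)\to+\infty$ this is a contradiction, so $\ch_0(\HH^{0}(\HH^{-1}_{\beta(t)}(E)))=0$.

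Once this rank vanishes, the quotient $\HH^{-1}(\HH^{0}_{\beta(t)}(E))$ in \eqref{Exa-2} has full rank $\operatorname{rank}\HH^{-1}(E)$ and is torsion-free (it lies in $\FF_{\beta(t)}$), which forces $\HH^{0}(\HH^{-1}_{\beta(t)}(E))$ to be saturated and of rank $0$, hence equal to the maximal torsion subsheaf of $\HH^{-1}(E)$, and $\HH^{-1}(\HH^{0}_{\beta(t)}(E))$ to be its fixed torsion-free quotient $G$. Thus $\HH^{0}_{\beta(t)}(E)$ now has constant Chern character with $\ch_0=-\operatorname{rank}(G)$, and applying $\nu_{\gamma(t)}(\HH^{0}_{\beta(t)}(E))>0$ together with \eqref{Lim-nu-right} gives $\operatorname{rank}(G)=0$, so $G=0$ and $\HH^{-1}(\HH^{0}_{\beta(t)}(E))=0$. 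Consequently $\HH^{0}(E)=\HH^{0}_{\beta(t)}(E)$ and $\HH^{-1}(E)=\HH^{0}(\HH^{-1}_{\beta(t)}(E))$ is torsion. With these ranks all zero, $\ch_0(E)=\ch_0(\HH^0(E))-\ch_0(\HH^{-1}(E))+\ch_0(\HH^{-2}(E))=\ch_0(\HH^{-2}(E))$, and $\ch_0(E)=0$ gives $\ch_0(\HH^{-2}(E))=0$; but $\HH^{-2}(E)$ is reflexive of dimension $3$ or zero by Lemma \ref{L-ref}, and a reflexive sheaf of rank $0$ is zero, so $\HH^{-2}(E)=0$.

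Finally, $\HH^{-1}(E)=\HH^{-1}_{\beta(t)}(E)$ is now a genuine torsion sheaf lying in $\FF_{\beta,\alpha}$. A nonzero subsheaf of dimension $\le 1$ would, having $\ch_0=\ch_1=0$, define a subobject belonging to $\TT_{\beta,\alpha}$, contradicting $\Hom(\TT_{\beta,\alpha},\FF_{\beta,\alpha})=0$; hence $\HH^{-1}(E)$ is pure of dimension $2$ or zero, which completes the three assertions. The main obstacle is exactly the control of the moving $\BB$-cohomology: everything hinges on recognizing $\HH^{0}(\HH^{-1}_{\beta(t)}(E))$ and $\HH^{-1}(\HH^{0}_{\beta(t)}(E))$ as a sub- and a quotient sheaf of the fixed sheaf $\HH^{-1}(E)$, which both legitimizes the limits in \eqref{Lim-nu-right} and forces the $\BB$-cohomology to stabilize; carrying this out carefully, rather than through the boundedness route of Theorem \ref{Groth} used on the left-hand side, is the delicate point.
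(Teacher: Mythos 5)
Your proof is correct and follows essentially the same route as the paper's: anchor the varying $\mathcal{B}^{\beta(t)}$-cohomologies to the fixed sheaves $\HH^{-1}(E)$ and $\HH^{0}(E)$ via \eqref{Exa-2}, identify $\HH^{0}(\HH^{-1}_{\beta(t)}(E))$ as the maximal torsion subsheaf of $\HH^{-1}(E)$ so that the Chern characters stabilize, kill the remaining ranks with the limits \eqref{Lim-nu-right} and the rank count using $\ch_0(E)=0$ together with the reflexivity of $\HH^{-2}(E)$, and deduce purity from the torsion pair $(\TT_{\beta,\alpha},\FF_{\beta,\alpha})$. The only deviation is that where the paper invokes Grothendieck's Theorem \ref{Groth} to bound the family of possible subsheaves $\HH^{0}(\HH^{-1}_{\beta(t)}(E))$ with torsion-free quotient, you use the elementary bound $\mu(F)\leq\mu_{\max}(\HH^{-1}(E))$ for positive-rank subsheaves against $\mu>\beta(t)\rightarrow+\infty$ --- a legitimate and slightly more direct substitute for that single step.
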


\begin{proof}
As in the case where $-\infty$, we do not know a priori that $\HH^{i}_{\beta(t)}(E)$ eventually becomes constant. To deal with this technical problem we start by considering that $E \in \Ag $ implies $\Im(Z_{\gamma(t),s}(E))=\ch_2(E)-\beta(t)\ch_1(E)\geq 0$, for $t>t_0$. This is equivalent to $\ch_1(E)\leq0$.

Also, it is known that $\HH^0(E) \in \TT_{\beta(t)}$ for $t>t_0$ and therefore $\ch_0(\HH^0(E))=0$, implying also that $\ch_0(\HH^{-1}(E))=\ch_0(\HH^{-2}(E))$. Suppose  $\HH^{-1}(\HH^{0}_{\beta(t)}(E))\neq 0$ and examine the exact sequence
\begin{equation}
    0 \rightarrow \HH^0(\HH^{-1}_{\beta(t)}(E)) \rightarrow \HH^{-1}(E) \rightarrow \HH^{-1}(\HH^0_{\beta(t)}(E)) \rightarrow 0
\end{equation}
and let $K=\mu(\mathcal{H}^{-1}(E))<\infty$. We can apply Grothendieck's theorem once again to show that
\[
\mathcal{S}:=\left\{p(Q) | \HH^{-1}(E) \twoheadrightarrow Q : {
	\begin{array}{ll}
	    \text{$Q$ is torsion free,}\\
		\text{$\mu(Q)\leq K$}
	\end{array}
    }\right\}
\]
is a finite set, where $p(Q)$ is the Hilbert polynomial of the sheaf $Q$. Moreover,
\[
\Tilde{\mathcal{S}}=\left\{ p(F) | f: F \hookrightarrow \mathcal{H}^{-1}(E) : {
	\begin{array}{ll}
	    \text{$\coker(f)$ is torsion free,}\\
		\text{ $\mu(F)\geq K$}
	\end{array}
    }\right\}.
\]
is also finite 

Now, applying this to the fact $\HH^0(\HH^{-1}_\beta(t)(E)) \in \TT_{\beta(t)}$ for $t>t_0$, while using that $\HH^{-1}(\HH^{0}_{\beta(t)}(E)) \in \FF_{\beta(t)}$, we can conclude that $\ch_0(\HH^0(\HH^{-1}_{\beta(t)}(E)))=0$ for $t$ sufficiently large. Actually, we can also conclude that $\HH^0(\HH^{-1}_{\beta(t)}(E))$ is the maximal torsion subsheaf in $\HH^{-1}(E)$ because its quotient is torsion free and the maximal torsion subsheaf is unique. This uniqueness implies that $\ch_k(\HH^i(\HH^{-j}_{\beta(t)}(E)))$ is fixed for all $i,j,k$ and $t$ sufficiently large. 

With this we can apply the equality in display \eqref{Lim-nu-right} to $\mathcal{H}^0_{\beta(t)}(E)$ in order to prove that \linebreak $\ch_0(\HH^{-1}(\HH^{0}_{\beta(t)}(E)))=0$. Since both $\HH^{-1}(\HH^{0}_{\beta(t)}(E))$ and $\HH^{-2}(E)$ are torsion free sheaves with the same $\ch_0$ character we can conclude that both are zero. 

We only have to prove that $\HH^0(E)$ is not $2$-dimensional and that $\HH^{-1}(E)$ is either pure or zero. The first assertion comes from equality \eqref{Lim-nu-right} applied to $\HH^0(E) \in \TT_{\gamma(t)}$. For the second one assume that $\HH^{-1}(E)$ is a non-pure $2$-dimensional sheaf then exist a subsheaf $T \hookrightarrow \HH^{-1}(E)$ with $\dim(T) \leq 1$, but this is also a subobject of $\HH^{-1}(E)$ in $\BB$ which is impossible because $\HH^{-1}(E)=\HH^{-1}_{\beta(t)}(E) \in \FF_{\gamma(t)}$ for $t\gg0$ and $\nu_{\beta,\alpha}(T)=+\infty$. From this argument, if $\HH^{-1}(E)$ had dimension less than $2$ we would conclude that $\HH^{-1}(E)=0$.
\end{proof}

The last lemma is a reduction on the kind of object we need to test for asymptotic stability.

\begin{lemma}\label{Lem-Deco}
Suppose $Q$ is an object in $\Ag$ for $t\gg0$ with $\ch_0(Q)=0$, $\ch_1(Q)\neq 0$ and $\dim(\HH^0(Q))=0$. Then there is a $2$-dimensional pure sheaf $K$ and a $0$-dimensional sheaf $L$ satisfying the exact sequence
\[
0 \rightarrow L \rightarrow Q \rightarrow K^\vee \rightarrow 0
\]
in $\Ag$ for $t\gg0$.
\end{lemma}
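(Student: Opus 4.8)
The plan is to obtain the sequence by first computing the derived dual $Q^\vee:=R\mathcal{H}om(Q,\opn)[2]$ and then dualizing back. I would begin by feeding $Q$ into Lemma \ref{Lem-CondL}: since $Q\in\Ag$ for $t\gg 0$ and $\dim(\HH^0(Q))=0$, the object has $\HH^{-2}(Q)=0$, the sheaf $G:=\HH^0(Q)$ is $0$-dimensional, and $F:=\HH^{-1}(Q)$ is either zero or pure of dimension $2$. Because $\ch_1(Q)=-\ch_1(F)\neq 0$, the sheaf $F$ cannot vanish, so $F$ is genuinely pure $2$-dimensional. Thus $Q$ sits in the truncation triangle $F[1]\to Q\to G\to F[2]$ with the two prescribed shapes.

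Next I would run the spectral sequence $E_2^{p,q}=\mathcal{E}xt^p(\HH^{-q}(Q),\opn)\Rightarrow \mathcal{E}xt^{p+q}(Q,\opn)$, whose only nonzero columns are $q=0$, contributing $\mathcal{E}xt^3(G,\opn)=G^D$ at $p=3$, and $q=1$, contributing $F^D=\mathcal{E}xt^1(F,\opn)$ at $p=1$ together with $\mathcal{E}xt^2(F,\opn)$ and $\mathcal{E}xt^3(F,\opn)$. The decisive input is the purity of $F$: the standard dimension estimates for a pure $2$-dimensional sheaf on $\pn$ give $\dim\mathcal{E}xt^2(F,\opn)\leq 0$ and $\mathcal{E}xt^3(F,\opn)=0$, so the only possibly positive-dimensional term is $F^D$, which is pure of dimension $2$. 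The sole nontrivial differential is $d_2\colon F^D\to G^D$; reading off $E_\infty$ then yields $\HH^0(Q^\vee)=\ker(d_2)=:K$, a subsheaf of the pure sheaf $F^D$ and hence itself pure of dimension $2$, while $\HH^1(Q^\vee)=:M$ is an extension of $\mathcal{E}xt^2(F,\opn)$ by $\coker(d_2)$ and is therefore $0$-dimensional. All other cohomology of $Q^\vee$ vanishes.

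I would then dualize the truncation triangle $K\to Q^\vee\to M[-1]\to K[1]$. Applying the involutive contravariant functor $(-)^\vee$ and using $(Q^\vee)^\vee=Q$, $(M[-1])^\vee=M^D$ for the $0$-dimensional $M$, and writing $K^\vee$ for the derived dual of the pure sheaf $K=\HH^0(Q^\vee)$, I obtain the distinguished triangle $M^D\to Q\to K^\vee\to M^D[1]$. Setting $L:=M^D$, a $0$-dimensional sheaf, this is the asserted sequence $0\to L\to Q\to K^\vee\to 0$ in $\Db$. To upgrade it to a short exact sequence in $\Ag$ it suffices to check that all three vertices lie in the heart: $Q\in\Ag$ by hypothesis, the $0$-dimensional $L$ lies in every such heart, and $K^\vee$ has $\HH^{-1}(K^\vee)=\mathcal{E}xt^1(K,\opn)$ reflexive pure $2$-dimensional with $\HH^0(K^\vee)=\mathcal{E}xt^2(K,\opn)$ of dimension $0$. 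This is precisely the cohomological shape dictated by Lemma \ref{Lem-CondL} (equivalently $K^\vee$ meets the hypotheses of Lemma \ref{Lem-SpecL}), and the same limit computation as in \eqref{Lim-nu-right} places $K^\vee\in\Ag$ for $t\gg 0$. A distinguished triangle all of whose vertices lie in the heart is automatically a short exact sequence, which finishes the argument.

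I expect the principal obstacle to be the bookkeeping in the spectral sequence, and in particular the verification that $M=\HH^1(Q^\vee)$ is genuinely $0$-dimensional. This is exactly where the purity of $F$ is indispensable: without it $\mathcal{E}xt^2(F,\opn)$ could carry a $1$-dimensional component, which would destroy both the $0$-dimensionality of $L$ and the identification of the quotient as a derived dual $K^\vee$. A secondary, more delicate point is confirming the membership $K^\vee\in\Ag$, so that the triangle in $\Db$ descends to an honest short exact sequence in the heart rather than merely a triangle.
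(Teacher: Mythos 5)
Your route is genuinely different from the paper's and, up to one step, it works. The paper never computes $Q^\vee$ directly: it modifies $Q$ in two stages \emph{inside the heart}, first quotienting by the $0$-dimensional singularity sheaf of $Q_1=\HH^{-1}(Q)$ so that $\HH^{-1}$ becomes reflexive (condition (b) of Lemma \ref{Lem-SpecL}), and then quotienting by a shift of the cokernel of $f\colon\mathcal{E}xt^1(Q_1^{DD},\opn)\to\mathcal{E}xt^3(\HH^0,\opn)$ to force condition (d); Lemma \ref{Lem-SpecL} then identifies the final quotient as $K^\vee$. You instead dualize once, truncate $Q^\vee$, and dualize back. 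The purity estimates you invoke, $\dim\mathcal{E}xt^2(F,\opn)\leq 0$ and $\mathcal{E}xt^3(F,\opn)=0$ for the pure $2$-dimensional $F=\HH^{-1}(Q)$, are correct, and your computation produces the same sheaf $K=\ker(F^D\to G^D)$ and the same $0$-dimensional $L$ as the paper's construction. Your version is shorter at the level of triangles and makes it transparent why $L$ is $0$-dimensional; the paper's version is longer but exhibits every intermediate object as a quotient of $Q$ in $\Ag$, so membership in the heart is carried along automatically at each stage.

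The one place where your argument is too thin is the assertion that $K^\vee\in\Ag$ for $t\gg0$, which you dispatch with a reference to \eqref{Lim-nu-right}. That display controls the limit of $\nu_{\gamma(t)}$ of a single Chern character, whereas membership of $K^D=\HH^{-1}(K^\vee)$ in $\FF_{\gamma(t)}$ requires bounding $\nu_{\gamma(t)}$ on \emph{all} subobjects of $K^D$ in $\BB$, which is not a numerical statement about $K^D$ alone. The missing argument is exactly the three-by-three diagram from the paper's proof: dualizing $0\to K\to F^D\to I\to 0$, with $I$ a $0$-dimensional subsheaf of $G^D$, gives $K^D\cong F^{DD}$, which contains $F=\HH^{-1}_{\beta(t)}(Q)\in\FF_{\gamma(t)}$ with $0$-dimensional quotient; comparing a subobject $V\hookrightarrow F^{DD}$ in $\BB$ with its pullback $V'\hookrightarrow F$ shows $Z^t_{\gamma(t)}(V)=Z^t_{\gamma(t)}(V')$ because the discrepancy is $0$-dimensional, so $F^{DD}\in\FF_{\gamma(t)}$ whenever $F\in\FF_{\gamma(t)}$. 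With that inserted, all three vertices of your triangle lie in $\Ag$ for $t\gg0$, and a distinguished triangle with all vertices in the heart is a short exact sequence there, as you say.
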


\begin{proof}
By Lemma \ref{Lem-CondL} we know that $\HH^{-2}(Q)=0$, $Q_1:=\HH^{-1}(Q)$ is pure $2$-dimensional sheaf and $Q_0=\HH^0(Q)$ a $0$-dimensional sheaf. Suppose that $Q_1$ is not reflexive and we can use the exact sequence
\[
0 \rightarrow Q_1 \rightarrow Q_1^{DD} \rightarrow L \rightarrow 0,
\]
where $L$ is the $0$-dimensional singularity sheaf associated to $Q_1$. This is an exact sequence in $\Coh$ and $\BB$ for all $t$,  we just have to consider the $\nu_{\gamma(t)}$-slope of subobjects $F$ of $Q_1^{DD}$ in $\BB$. This is done via the following diagram

\centerline{
\xymatrix{
    F' \ar@{^{(}->}[r] \ar@{^{(}->}[d] &F \ar@{->>}[r] \ar@{^{(}->}[d] &I \ar@{^{(}->}[d] \\
    Q_1 \ar@{^{(}->}[r] &Q_1^{DD} \ar@{->>}[r]  &L
}}
\noindent proving that $Q_1^{DD} \in \FF_{\gamma(t)}$ whenever $Q_1 \in \FF_{\gamma(t)}$, because $I \hookrightarrow L$ is a $0$-dimensional sheaf and $Z_{\beta,\alpha}^t(I)=0$ for all $(\beta,\alpha) \in \mathbb{H}$ making $Z_{\gamma(t)}^t(F)=Z_{\gamma(t)}^t(F')$. Leaving us with the exact sequence
\[
0 \rightarrow L \rightarrow Q_1[1] \rightarrow Q_1^{DD}[1] \rightarrow 0 
\]
in $\Ag$. Let $W$ be the cokernel of the composition $L \hookrightarrow Q_1[1]$ and $Q_1[1] \hookrightarrow Q$ in $\Ag$. By the construction of $L$ we know that $W$ satisfy conditions (a),(b),(c) in Lemma \ref{Lem-SpecL}, if $W$ were to satisfy condition (d) we would conclude the proof. Fix $W_1=\HH^{-1}(W)$ and $W_0=\HH^0(W)$.

Assume instead that $f: \mathcal{E}xt^1(W_1,\opn) \rightarrow \mathcal{E}xt^3(W_0,\opn)$ is not surjective and let $\Tilde{P}=\coker(f)$, $\Tilde{L}=\ker(f)$, $\tilde{I}=\im(f)$ such that they are defined by the exact sequence 

\hspace{25mm}{\xymatrix{
0 \ar[r] &\tilde{L} \ar[r] &W_1^D \ar[rr]^f \ar[rd] &&W_0^D \ar[r] &\tilde{P} \ar[r] &0\\
&&&\tilde{I} \ar[ur]
}}

Now, in $\Db$, we can dualize the exact sequence defining $\Tilde{P}$ as a cokernel in $\Coh$, keeping in mind that every $0$-dimensional sheaf is reflexive, and compose with the distinguished triangle defining $W$ in $\Ag$ to obtain

\hspace{30mm}
\xymatrix{
&&\Tilde{P}^D \ar[d] \ar[rd]^{h} \\
W_1[1] \ar[r] &W \ar[r] &W_0 \ar[r] \ar[d] &W_1[2]\\
&&\tilde{I}^D.
}

\noindent After applying the dualization functor to the right part of the diagram we see that we have the commutative diagramm:

\hspace{65mm}
\xymatrix{
\tilde{P}\\
W_0^D \ar[u] &W_1^D \ar[l]^f \ar[ul]_{h^\vee}
}

\noindent implying that $h^\vee$ is the composition of $f$ with its cokernel, making $h^\vee=0$. As a consequence, the map $\tilde{P}^D \rightarrow W_0$ lifts to $g: \tilde{P}^\vee \rightarrow W$

Moreover, due to $\tilde{P}$ being a $0$-dimensional sheaf, it is clear that the cone of $g$ in $\Db$, $C(g)$, is also in $\Ag$ whenever $W \in \Ag$, making $\Tilde{P}$ a subobject of $W$ in $\Ag$ such that $C(g)$ satisfy all conditions in Lemma \ref{Lem-SpecL} making it a dual of a pure $2$-dimensional sheaf $K$. Since the kernels of both maps $Q \rightarrow W$ and $W \rightarrow C(g)$ are $0$-dimensional sheaves, it is clear that $L:=\ker(Q \rightarrow C(g))$ is a $0$-dimensional concluding the proof.
\end{proof}

\begin{rmk}\label{Rmk-reduc}
One application of the previous Lemma is that we can verify asymptotic $\lambda_{\gamma(t)}$-(semi)stability only by considering quotients which are duals of $2$-dimensional pure sheaves because
\begin{equation}
    \underset{t \rightarrow +\infty}{\lim} \beta^l( \lambda_{\gamma(t),s}(Q)-\lambda_{\gamma(t),s}(K^\vee))=\left\{
	\begin{array}{ll}
	    \frac{\ch_3(L)}{\ch_1(K^D)}\geq 0  & \mbox{if } l=1, \\
		0 & \mbox{if } l=0, \\
	\end{array}
    \right.
\end{equation}
such that $\lambda_{\gamma(t),s}(E) \leq \lambda_{\gamma(t),s}(Q)$ if and only if $\lambda_{\gamma(t),s}(E) \leq \lambda_{\gamma(t),s}(K^\vee)$ for $t$ sufficiently large. This is the Bridgeland stability equivalent to \cite[Proposition 1.2.6]{HL}, where it is shown that we can test Gieseker-(semi)stability only using pure quotients.
\end{rmk}

\begin{mthm}\label{Main-2}
An object $E \in \Db$ with $\ch_0(E)=0$ is asymptotic $\lambda_{\gamma}$-(semi)stable if and only if it is the dual of a Gieseker-(semi)stable sheaf. 
\end{mthm}

\begin{proof}

\textit{Case $\ch_1(E)\neq 0$:} We start by assuming that $E$ is asymptotic $\lambda_\gamma$-(semi)stable and we already have information on the cohomology of E given by Lemma \ref{Lem-CondL}. Suppose that $\HH^{0}(E)$ is a $1$-dimensional sheaf and observe that
\[
\underset{t \rightarrow +\infty}{\lim}\frac{1}{\beta(t)}\lambda_{\gamma(t),s}(\HH^0(E))=-1.
\]

Not only $\HH^0(E)$, in this case, is a quotient of $E$ but also
\[
\underset{t \rightarrow +\infty}{\lim}\frac{1}{\beta(t)}(\lambda_{\gamma(t),s}(E)-\lambda_{\gamma(t),s}(\HH^0(E)))=c_\gamma\left(s+\frac{1}{6}\right)+\frac{1}{2}>0,
\]
making this a contradiction to $E$'s asymptotic stability. Now we can apply Lemma \ref{Lem-Deco} to find a $0$-dimensional sheaf as a subobject of $E$ and suppose $E$ is not a dual of a sheaf, this subobject would contradict the asymptotic $\lambda_\gamma$-(semi)stability of $E$, making $K=E^\vee$ a pure sheaf of dimension $2$. Let $Q$ be a pure $2$-dimensional quotient of $K$ with kernel $F\in \Coh$. Dualizing the exact sequence in $\Coh$ determined by $K$, $Q$ and $F$ we obtain the distinguished triangle

\begin{equation}\label{Eq-TriDual}
    Q^\vee \rightarrow E \rightarrow F^\vee \rightarrow G^\vee[1]
\end{equation}

in $\Db$. Now, $Q^\vee$ is in $\Ag$ whenever $E$ is in $\Ag$ because \linebreak $\HH^{-1}(Q^\vee) \hookrightarrow \HH^{-1}(E) \in \FF_{\gamma(t)}$. To see that $F^\vee \in \Ag$ we just apply the cohomology functor to \eqref{Eq-TriDual} and study the subobjects $V$ of $F^D$ in $\BB$ using the diagram
\centerline{
\xymatrix{
    U \ar@{^{(}->}[r] \ar@{^{(}->}[d] &V \ar@{->>}[r] \ar@{^{(}->}[d] &I \ar@{^{(}->}[d] \\
    K^D \ar[r] &F^D \ar[r]  &\mathcal{E}xt^2(Q,\opn)
}}
with $\dim(\mathcal{E}xt^2(Q,\opn))=0$ to conclude that $F^D\in \FF_{\gamma(t)}$ whenever $K^D \in \FF_{\gamma(t)}$. To finish, we just have to look at the limits 
\begin{equation}\label{Eq-infLam1}
\underset{t \rightarrow +\infty}{\lim}(\lambda_{\gamma(t),s}(E)-\lambda_{\gamma(t),s}(Q^\vee))=(-1)\delta_{21}\frac{\left(s+\frac{1}{6}\right)c_\gamma + \frac{1}{2}}{\ch_1(E)\ch_1(Q^D)}\leq 0,
\end{equation}
if $\delta_{1,2}(E,F^\vee)\neq 0$ and on the contrary we have
\begin{equation}\label{Eq-infLam2}
\underset{t \rightarrow +\infty}{\lim}\beta(t)\cdot (\lambda_{\gamma(t),s}(E)-\lambda_{\gamma(t),s}(Q^\vee))=(-1)\frac{\delta_{31}}{\ch_1(E)\ch_1(Q^D)}\leq 0,
\end{equation}
concluding that $K=E^\vee$ is a Gieseker (semi)stable object.

Now let $K=E^\vee$ be Gieseker (semi)stable sheaf and remember that $K^D=\HH^{-1}(E)$ is a $\gs_1$-(semi)stable sheaf implying, by Proposition \ref{Pro-nu}, that $K^D$ is asymptotic $\nu_\gamma$-(semi)stable. One consequence of this fact is that $K^D \in \FF_{\gamma(t)}$ for $t\gg0$ because $\underset{t \rightarrow +\infty}{\lim}\nu_{\gamma(t)}(K^D)=-\infty$, therefore $E \in \Ag$ for t sufficiently large. Consider now the quotient $Q=\Tilde{Q}^\vee$ of $E$ in $\Ag$ for $t\gg0$, such that $\Tilde{Q}$ is a $2$-dimensional pure sheaf, and the exact sequence in $\Ag$
\begin{equation}\label{Eq-LamPos}
0 \rightarrow F \rightarrow E \rightarrow \Tilde{Q}^\vee \rightarrow 0.
\end{equation}
Since $F \in \Ag$ whenever $Q$ is a quotient of $E$, we see that $F$ satisfy the properties described in Lemma \ref{Lem-CondL}. Furthermore, we can dualize the sequence \eqref{Eq-LamPos} to obtain that $\HH^{-i}(F^\vee)=0$ whenever $i\neq 1,0$, and by applying the dualizing functor to the sequence decomposing $F$ in $\Ag$ we can conclude that $F^\vee=\Tilde{F}$ is a sheaf. In this case we have
\[
0 \rightarrow \Tilde{Q} \rightarrow K \rightarrow \Tilde{F} \rightarrow 0
\]
and we only have to apply equations \eqref{Eq-infLam1} and \eqref{Eq-infLam2} to finish the proof.

\textit{Case $\ch_1(E)=0$:} Considering $\ch_2(E)\neq 0$ we first assume that $E$ is $\lambda_\gamma$-(semi)stable and use Lemma \ref{Lem-CondL} to see that $E\in \Coh$ and $E$ is pure because a $0$-dimensional sheaf would destabilize $E$. If $F \in \Coh$ is a sheaf of dimension at most $1$ then $F \in \Ag$ for $t\gg0$, consequence of $\mu(F)=+\infty$ and $\ch_1^{\beta}(F)=0$.  Now we just have to consider the equality for a subsheaf $F$ of $E$
\begin{equation}\label{Eq-LamPos0}
    \lambda_{\gamma(t),s}(E)-\lambda_{\gamma(t),s}(F)=\frac{\delta_{32}(E,F)}{\ch_2(F)\ch_2(E)}
\end{equation}
to see that $E$ is Gieseker-(semi)stable. Conversely, if $F \hookrightarrow E$ in $\Ag$ for all $t$ sufficiently large then $\ch_0(F)=\ch_1(F)=0$ because otherwise $\Im(\coker(F \hookrightarrow E))$ would be negative for some $t\gg0$. Therefore, by Lemma \ref{Lem-CondL} again, $F \in \Coh$ and the same is true for its quotient in $\Ag$,  and applying the same equality in display \eqref{Eq-LamPos0} to prove that $E$ is $\lambda_\gamma$-(semi)stable.

If $\ch_2(E)=0$, by applying Lemma \ref{Lem-CondL} we conclude that $E$ is both a Gieseker-semistable sheaf and $\lambda_{\beta,\alpha,s}$-semistable for all $(\beta,\alpha) \in \mathbb{H}$ and $s>0$, in either case of the theorem.

\end{proof}

We finish this section with an example to illustrate our Main Theorem \ref{Main-2}.

Let $i: S \hookrightarrow \mathbb{P}^3$ be a smooth subvariety of codimension $c \leq 2$ with structure sheaf $i_\ast \mathcal{O}_S$. As discussed in Remark \ref{She-stru}, we know that $i_\ast(\mathcal{O}_S)$ is Gieseker-stable and by applying our Main Theorem \ref{Main-2} we can conclude that $(i_\ast \mathcal{O}_S)^\vee$ is asymptotic $\lambda_\gamma$-stable. This sheaf is described splicitly in \cite[Corollary 3.40]{Huy} as \[ (i_\ast \mathcal{O}_S)^\vee\simeq i_\ast\omega_S\otimes \omega_{\mathbb{P}^3}^\ast[2-c],\] where $\omega_S$ and $\omega_{\mathbb{P}^3}$ are the dualizing bundles of $S$ and $\mathbb{P}^3$, respectively. The left-hand side of this isomorphism is the $i_\ast$-image of the relative dualizing bundle with respect to $i$.

\begin{example}
If $C$ is a curve over $\mathbb{P}^3$ then $(i_\ast\mathcal{O}_C)^\vee=(i_\ast\mathcal{O}_C(d))^D$ is asymptotic $\lambda_\gamma$-stable. When $C$ is complete intersection between hypersurfaces of degree $f$ and $g$, then we can find a resolution \begin{equation}\label{last-eq}
     0 \rightarrow \mathcal{O} \rightarrow \mathcal{O}(f) \oplus\mathcal{O}(g) \rightarrow \mathcal{O}(f+g) \rightarrow (i_\ast\mathcal{O}_C)^D \rightarrow 0.
\end{equation} Moreover, we know that $i_\ast\mathcal{O}_C$ is a pure sheaf making $(i_\ast\mathcal{O}_C)^\vee=(i_\ast\mathcal{O}_C)^D$ an asymptotic $\lambda_\gamma$-stable sheaf. Using equation \eqref{last-eq} we can determine the walls $\Upsilon_{\mathcal{O},\mathcal{O}(f)\oplus\mathcal{O}(g),s}$ and $\Upsilon_{\mathcal{O}(f+g),(i_\ast\mathcal{O}_C)^D,s}$, for a fixed $s>0$, these can be described by Figure $3$.

\begin{figure}[htp]
    \centering
    \includegraphics[width=13cm]{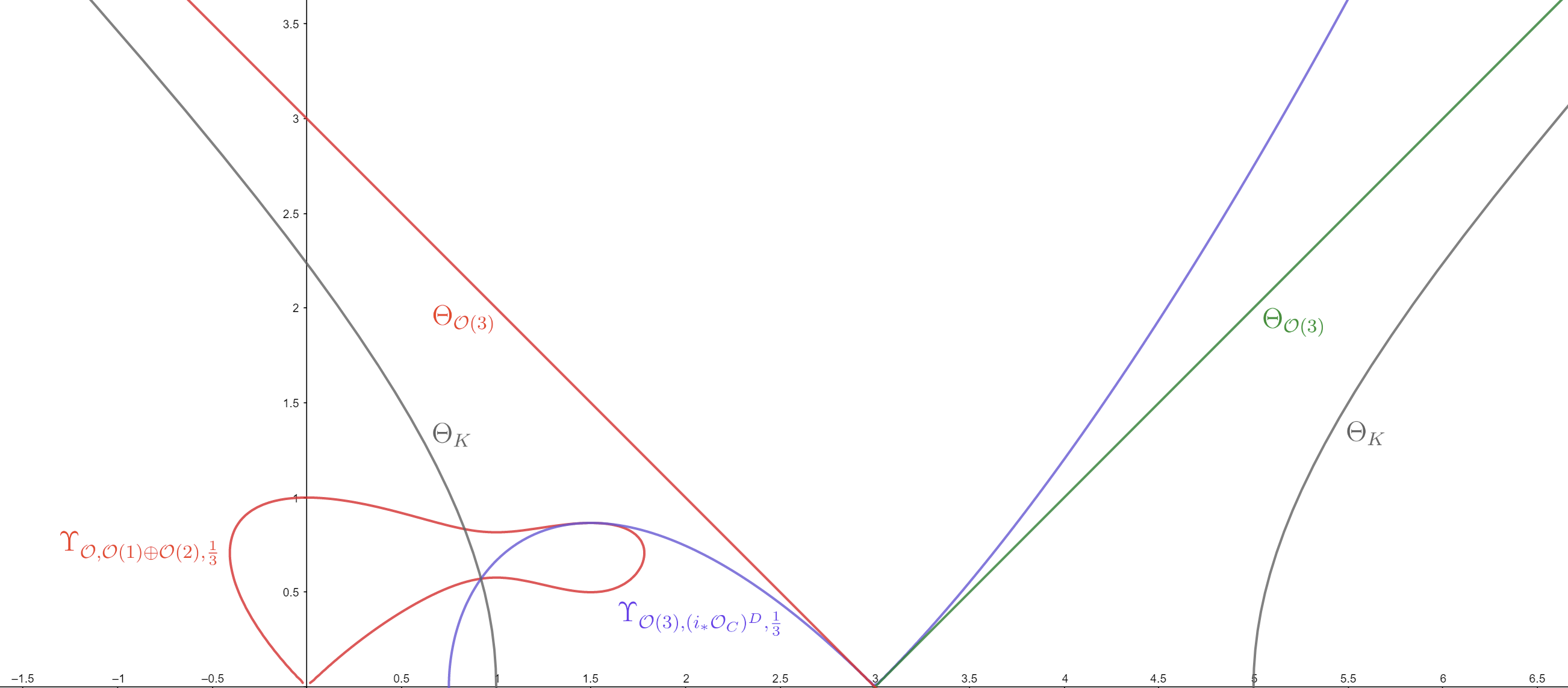}
    \caption{Distinguished curves and walls related to $(i_\ast\mathcal{O}_C)^D$ when $f=1$, $g=2$ and $s=1/3$.}
\end{figure}

Let $(\tilde{\beta},\tilde{\alpha})$ be the point in $\Upsilon_{\mathcal{O},\mathcal{O}(1)\oplus\mathcal{O}(2),\frac{1}{3}} \cap \Upsilon_{\mathcal{O}(3),(i_\ast\mathcal{O}_C)^D,\frac{1}{3}}$ with $\tilde{\beta}=1.5$ and \linebreak $K=\ker(\mathcal{O}(3) \rightarrow (i_\ast\mathcal{O}_C)^D)$ in $\Coh$. We have the exact sequences \[ 0 \rightarrow \mathcal{O}(1)\oplus\mathcal{O}(2)[1]\rightarrow K[1] \rightarrow \mathcal{O}[2] \rightarrow 0 \] \[0 \rightarrow \mathcal{O}(3) \rightarrow (i_\ast\mathcal{O}_C)^D \rightarrow K[1] \rightarrow 0\] in $\mathcal{A}^{\tilde{\beta},\tilde{\alpha}}$ such that $(i_\ast\mathcal{O}_C)^D$ is Bridgeland stable right after crossing the intersection of these walls, towards $\beta\rightarrow+\infty$, but it is still not clear how to conclude that there is no other wall destabilizing $(i_\ast\mathcal{O}_C)^D$ going further from the origin.

\end{example}

\bibliographystyle{abbrv}
\bibliography{bibliografia}

\end{document}